%%%  short_cycles.tex  map.tex  
%%%  Short cycles in high genus unicellular maps
%%%  Svante Janson and Baptiste Louf

\documentclass[11pt,reqno,tbtags,a4paper]{amsart}
\usepackage{amssymb}
\usepackage{xpunctuate}
\usepackage{amsbsy}
\usepackage{url}
\usepackage{graphicx}
\usepackage[square,numbers]{natbib}
\bibpunct[, ]{[}{]}{;}{n}{,}{,}

\title%[]
{Short cycles in high genus unicellular maps}

\date{3 March, 2021}
%(typeset \today{} \klockan)}   %\Small
%\ifdraft{(typeset \today{} \klockan)}
%{\unskip}} 
%; revised ...

% \author{Svante Janson and Baptiste Louf}
\author{Svante Janson}
%\thanks{SJ partly supported by the Knut and Alice Wallenberg Foundation}
%\thanks{Delvis stött av Knut och Alice Wallenbergs Stiftelse}
%\thanks{Wallenberg Scholar, 
%  partly supported by the Knut and Alice Wallenberg Foundation}  
\thanks{Supported by the Knut and Alice Wallenberg Foundation}
% KAW 2016.0357  %fr.o.m. 2017-07-01
\address{Department of Mathematics, Uppsala University, PO Box 480,
SE-751~06 Uppsala, Sweden}
\email{svante.janson@math.uu.se}
%  \qquad http://www2.math.uu.se/{\tiny$\sim$}svante/}
\newcommand\urladdrx[1]{{\urladdr{\def~{{\tiny$\sim$}}#1}}}
\urladdrx{http://www2.math.uu.se/~svante/}
%\urladdr{http://www.math.uu.se/svante-janson}
%https://orcid.org/0000-0002-9680-2790

\author{Baptiste Louf}
%\address{Department of Mathematics, Uppsala University, PO Box 480, SE-751~06 Uppsala, Sweden}
\email{baptiste.louf@math.uu.se}
\urladdrx{https://baptiste.louf.fr}

%\keywords{<keywords>}
\subjclass[2010]{} 
%{60C05 (68P10,68W40)} %%{Primary: <subject>; Secondary: <subject>}

\overfullrule 0pt % no overfullrule even for draft option

%\linespread{1.6}  % double spaced

\numberwithin{equation}{section}

\allowdisplaybreaks

% \setlength{\textheight}{22.6cm} % 632 pt standard = 222.1 mm
% \setlength{\textwidth}{400pt} %360 pt standard = 126,5 mm
%% \setlength{\textwidth}{16cm} % = 455 pt
% \setlength{\topmargin}{0cm} % 23.25343pt = 8.8 mm
% \setlength{\oddsidemargin}{1cm}
% \setlength{\evensidemargin}{1cm}  %46.48395pt = 16.34 mm

% \setlength{\textheight}{23cm}
% \setlength{\textwidth}{16cm}
% \setlength{\topmargin}{-1cm}
% \setlength{\oddsidemargin}{-0.2cm}
%\setlength{\evensidemargin}{0.0cm}

%\showthe\marginparwidth
%\marginparwidth=90pt %%(default) %% smaller?

%% Exempel på manipulering av numrering:
%\show\refstepcounter
%\show\thetheorem
%\let\xtheproperty\theproperty
%\def\theproperty{Q\xtheproperty}
%\newtheorem*{property*}{Property \csname @currentlabel\endcsname}
%\newenvironment{propertyx}
%{%
%\begin{property*}}
%{\end{property*}}
%\makeatletter\xdef\@currentlabel{\theproperty$'$}\makeatother
%\begin{propertyx} ...
%\newtheorem*{propertyx}{Property \theproperty$'$} %hur får man label rätt?

% \newproof{@proofx}{\pfnamn}   %% SJ, for proof with special heading
% \newenvironment{proofx}[1]{\gdef\pfnamn{#1}\begin{@proofx}}{\end{@proofx}} 

%%%%% THEOREMS ETC %%%%%

\theoremstyle{plain}% default
\newtheorem{theorem}{Theorem}[section]
\newtheorem{lemma}[theorem]{Lemma}
\newtheorem{proposition}[theorem]{Proposition}
\newtheorem{corollary}[theorem]{Corollary}

\theoremstyle{definition}

\newtheorem{exampleqqq}[theorem]{Example}

%\AtEndEnvironment{example}{\null\hfill\qedsymbol}
%\let\oldexam\example
%\renewcommand{\example}{\oldexam\normalfont}

\newtheorem{remarkqqq}[theorem]{Remark}
\newenvironment{remark}{\begin{remarkqqq}}
  {\hfill\qedsymbol\end{remarkqqq}}
%\AtEndEnvironment{remark}{\null\hfill\qedsymbol}

% \newtheorem*{definition}{Definition}

%\newtheorem*{personal}{Personal remark}

\theoremstyle{remark}

\newenvironment{romenumerate}[1][-10pt]{% optional argument changes indentation
\addtolength{\leftmargini}{#1}\begin{enumerate}% gives (i), (ii) etc.
 }{\end{enumerate}}

\newenvironment{PXenumerate}[1]{%  argument yields prefix
%\addtolength{\leftmargini}{-10pt}%
\begin{enumerate}% gives (#1 1), (#1 2) etc.
 }{\end{enumerate}}

\newcounter{oldenumi}
% continues numbering from previous romenumerate
{\setcounter{oldenumi}{\value{enumi}}
\begin{romenumerate} \setcounter{enumi}{\value{oldenumi}}}
{\end{romenumerate}}

% thmenumerate gives (i) run in after ``Theorem ...''
% now works with \label too, but needs '%' after \label{...}%
\newcounter{thmenumerate}

\newcounter{xenumerate}   %no left indentation; thus wider lines

 %$ $ tycks eliminera fel
		%som annars uppstår i typstorlek på följande fotnotsnummer.

%\newcommand\step[1]{\smallskip\noindent\emph{Step #1.} \noindent}
%\newcommand\step[1]{\par{#1.}}
%\newcounter{steps}

\newcommand{\refT}[1]{Theorem~\ref{#1}}

\newcommand{\refL}[1]{Lemma~\ref{#1}}
\newcommand{\refLs}[1]{Lemmas~\ref{#1}}

\newcommand{\refS}[1]{Section~\ref{#1}}

%\newcommand{\refSS}[1]{Subsection~\ref{#1}}

%\newcommand{\refP}[1]{Proposition~\ref{#1}}

%\newcommand{\refConj}[1]{Conjecture~\ref{#1}}

% Marginal notes and labelling macros
%\newcommand\marginal[1]{\marginpar{\raggedright\parindent=0pt\tiny #1}}
\newcommand\marginal[1]{\marginpar[\raggedleft\tiny #1]{\raggedright\tiny#1}}
\newcommand\REM[1]{{\raggedright\texttt{[#1]}\par\marginal{XXX}}}
\newcommand\XREM[1]{\relax}

\begingroup
  \count255=\time
  \divide\count255 by 60
  \count1=\count255
  \multiply\count255 by -60
  \advance\count255 by \time
  \ifnum \count255 < 10 \xdef\klockan{\the\count1.0\the\count255}
  \else\xdef\klockan{\the\count1.\the\count255}\fi
\endgroup

% further AmSTeX adaptations
   % for theorem without proof
 % for proof with explicit \qed
%\newcommand\qedtag{\tag*{\qedsymbol}}

%\DeclareMathOperator*{\sumsum}{\sum\sum}
\DeclareMathOperator*{\sumx}{\sum\nolimits^{*}}

\DeclareMathOperator*{\hsumx}{\widehat{\sum}}
\newcommand{\sumio}{\sum_{i=0}^\infty}

\newcommand{\sumlo}{\sum_{\ell=0}^\infty}

\newcommand{\sumk}{\sum_{k=1}^\infty}
\newcommand{\suml}{\sum_{\ell=1}^\infty}

\newcommand{\sumim}{\sum_{i=1}^m}

\newcommand{\sumlk}{\sum_{\ell=1}^k}

\newcommand{\prodik}{\prod_{i=1}^k}

\newcommand\set[1]{\ensuremath{\{#1\}}}

\newcommand\xpar[1]{(#1)}
\newcommand\bigpar[1]{\bigl(#1\bigr)}
\newcommand\Bigpar[1]{\Bigl(#1\Bigr)}

\newcommand\bigsqpar[1]{\bigl[#1\bigr]}

\newcommand\xcpar[1]{\{#1\}}

\newcommand\Bigabs[1]{\Bigl\lvert#1\Bigr\rvert}

\def\rompar(#1){\textup(#1\textup)}    % usage: \rompar(...)

\newcommand\Bigparfrac[2]{\Bigpar{\frac{#1}{#2}}}

\def\xexp(#1){e^{#1}}

\newcommand\ntoo{\ensuremath{{n\to\infty}}}

\newcommand\mtoo{\ensuremath{{m\to\infty}}}

\newcommand\upto{\nearrow}

\newcommand\start{\text{start}}

\newcommand\eend{\text{end}}

\newcommand\punkt{\xperiod}    % xpunctuate
\newcommand\iid{i.i.d\punkt}    
\newcommand\ie{i.e\punkt}
\newcommand\eg{e.g\punkt}

\newcommand{\as}{a.s\punkt}

  % som felkoll

\newcommand{\tend}{\longrightarrow}
\newcommand\dto{\overset{\mathrm{d}}{\tend}}
\newcommand\pto{\overset{\mathrm{p}}{\tend}}

\newcommand\bbR{\mathbb R}

\newcommand\bbZ{\mathbb Z}

\newcommand\E{\operatorname{\mathbb E{}}}
\renewcommand\P{\operatorname{\mathbb P{}}}
\newcommand\Var{\operatorname{Var}}
\newcommand\Cov{\operatorname{Cov}}

\newcommand\Po{\operatorname{Po}}

\newcommand\Ge{\operatorname{Ge}}

\newcommand\ga{\alpha}
\newcommand\gb{\beta}

\newcommand\gD{\Delta}
\newcommand\gf{\varphi}
\newcommand\gam{\gamma}

\newcommand\kk{\chi}
\newcommand\gl{\lambda}

\newcommand\go{\omega}

\newcommand\gs{\sigma}

\newcommand\gth{\theta}
\newcommand\eps{\varepsilon}

\renewcommand\phi{\xxx}  %% WARNING

\newcommand\cC{\mathcal C}

\newcommand\cE{\mathcal E}

\newcommand\cP{\mathcal P}

\newcommand\cT{{\mathcal T}}

\newcommand\indic[1]{\boldsymbol1\xcpar{#1}}

\newcommand\qw{^{-1}}

\newcommand\qqw{^{-1/2}}

\newcommand{\pgf}{probability generating function}

\newcommand\lhs{left-hand side}
\newcommand\rhs{right-hand side}

\newcommand\xoo{_1^\infty}

\newcommand\cperm{C-permutation}
\newcommand\ctree{C-decorated tree}
\newcommand\fS{\mathfrak{S}}
\newcommand\fsc{\fS^{\textsf{C}}}
\newcommand\Nknm{N_{k;n.m}}
\newcommand\xinm{\xi^{(n,m)}}
\newcommand\Snm{S^{(n,m)}}
\newcommand\bm{\mathbf{m}}
\newcommand\syst{\mathrm{syst}}
\newcommand\NNng[1]{\bar{N}_{n,g_n}^{(#1)}}
\newcommand\kkx[1]{^{(#1)}}
\newcommand\kkk{\kkx{k}}
\newcommand\kkl{\kkx{\ell}}
\newcommand\bG{\mathbf{G}}
\newcommand\bGn{\bG_n}
\newcommand\bd{{\mathbf{d}}}
\newcommand\mnl{m\kkl_n}
\newcommand\sd{S_\bd}

\newcommand\PP{\mathfrak{P}}
\newcommand\PMx[1]{\PP^{[#1]}}
\newcommand\PM{\PMx{\bm}}
\newcommand\PPx[1]{\PP^{(#1)}}
\newcommand\PPl{\PPx{\ell}}
\newcommand\PPlx[1]{\PPx{\ell_{#1}}}
\newcommand\hPPx[1]{\widehat{\PP}^{(#1)}}
\newcommand\hPPl{\hPPx{\ell}}
\newcommand\hPPlx[1]{\hPPx{\ell_{#1}}}
\newcommand\pMx[1]{P^{[#1]}}
\newcommand\pM{\pMx{\bm}}

\newcommand\CC{\mathfrak{C}}
\newcommand\cc{C}
\newcommand\CCx[1]{\CC^{(#1)}}
\newcommand\CCl{\CCx{\ell}}

\newcommand\tCCx[1]{\widetilde\CC^{(#1)}}
\newcommand\tCCl{\tCCx{\ell}}
\newcommand\tCClx[1]{\tCCx{\ell_{#1}}}
\newcommand\ccx[1]{\cc^{(#1)}}
\newcommand\ccl{\ccx{\ell}}

\newcommand\tccx[1]{\widetilde\cc^{(#1)}}
\newcommand\tccl{\tccx{\ell}}
\newcommand\tcclx[1]{\tccx{\ell_{#1}}}
\newcommand\bP{\mathbf{P}}
\newcommand\Poo{\PP}
\newcommand\bt{{\mathbf t}}
\newcommand\simeqx{\equiv}

\newcommand\tpi{\tilde\pi}
\newcommand\On[1]{O\bigpar{n^{#1}}}
\newcommand\on[1]{o\bigpar{n^{#1}}}
\newcommand\bbt{\bar{\bt}}
\newcommand\Pm{\kappa^{(\bm)}}
\newcommand\Pmx[1]{\kappa^{(\bm(#1))}}
\newcommand\Npl{P_\ell}
\newcommand\tngs{(T_n,\bgs)}
\newcommand\mnu{\nu}
\newcommand\gq{\zeta}
\newcommand\SUCC{\textsf{succ}}
\newcommand\disj{\textsf{disj}}
\newcommand\Ext{\operatorname{Ext}}

%QQQ

\newcommand{\Holder}{H\"older}

\newcommand{\sig}{\boldsymbol{\sigma}}
\newcommand{\bgs}{\sig}
\newcommand{\T}{\mathbf{T}}

\hyphenation{Upp-sala}

%\newcommand\urladdrx[1]{\urladdr{\def~{\~{}}#1}}
%\newcommand\urladdrx[1]{{\urladdr{\def~{{\tiny$\sim$}}#1}}}
% By the Skorohod coupling theorem \cite[Theorem~4.30]{Kallenberg},

\begin{document}

\begin{abstract} 

We study large uniform random maps with one face whose genus grows linearly with
the number of edges, which are a model of discrete hyperbolic geometry. In previous works, several hyperbolic geometric features have been investigated.
In the present work, we study the number of short cycles in a uniform unicellular map of high genus, and we show that it converges to a Poisson distribution. As a corollary, we obtain the law of the systole of uniform unicellular maps in high genus. We also obtain the asymptotic distribution of the vertex degrees in such a map.
\end{abstract}

\maketitle

\section{Introduction}
\subsection*{Combinatorial maps.} Maps are defined as gluings of polygons forming a (compact, connected, oriented) surface. They have been studied extensively in the last decades, especially in the case of planar maps, i.e. maps of the sphere. They were first approached from the combinatorial point of view, both enumeratively, starting with \cite{Tut63}, and bijectively, starting with \cite{Sch98these}.

More recently, relying on previous combinatorial results, the properties of large random maps have been studied. More precisely, one can study the geometry of random maps picked uniformly in certain classes, as their size tend to infinity. In the case of planar maps, this culminated in the identification of two types of ``limits" (for two well defined topologies on the set of planar maps): the local limit (the \emph{UIPT}\footnote{in the case of triangulations, i.e. maps made out of triangles.} \cite{AS03}) and the scaling limit (the \emph{Brownian map} \cite{LG11, Mie11}).

All these works have been extended to maps with a fixed genus $g>0$. Enumerative (asymptotic) results have been obtained (see for instance \cite{BC86}), and there are bijections for maps on any surface (see for instance \cite{CMS09}). On the probabilistic side, equivalents of the Brownian map in genus $g>0$ have been constructed \cite{Bet16}.

\subsection*{High genus maps.} 
Very recently, another regime has been studied: high genus maps are defined as (sequences of) maps whose genus grow linearly in the size of the map. They have a negative average discrete curvature, and can therefore be considered as a discrete model of hyperbolic geometry.
It is usually hard to get geometric results on general models of maps (such as triangulations). So far, only the local convergence has been established \cite{BL19,BL20}, along with a few other geometric features \cite{Lou20}. 

However, there is a model that is easier to work with: unicellular maps,
i.e.\ maps with only one face. They are in bijection with decorated trees \cite{CFF13}, which makes them easier to study. Although they belong to a different universality class, it is widely expected that unicellular maps in high genus actually share very similar features with more general models of maps. So far, the local limit \cite{ACCR13}, the diameter \cite{Ray13a} and the presence of large expander subgraphs \cite{Lou21} of unicellular maps have been discovered (the last two properties are still open questions for general models of maps).

\subsection*{Distribution of short cycles.}
In this paper, we are interested in another geometric observable: the
\emph{systole}, i.e.\ the size of the smallest non-contractible cycles. The question of the systole is a well studied question in the field of geometry of surfaces (see for instance \cite{Gro92, Mir13, MP19} and in particular \cite{CVHM15} in the case of deterministic maps).

We
will work 
with unicellular maps, in which we have the extra advantage that any cycle
is non-contractible; hence the systole equals the \emph{girth}
(the length of the smallest cycle)
of the underlying graph. 
A little bit of progress on the question of short
non-contractible cycles in general maps is made in \cite{Lou20}. 

We actually prove a more general theorem, namely that the number of short
cycles in a high genus unicellular map follows a Poisson law.
For definitions, see \refS{Sdef}; 
note that the unicellular maps are rooted at an oriented edge, and
that we define the size of a map as its number of edges.

\begin{theorem}\label{thm_cycles}
Take a sequence $(g_n)$ such that $\frac{g_n}{n}\to \theta\in (0,1/2)$ as $n\to \infty$.
Let $C_{n,g_n}^{(k)}$ be the number of %simple ?? 
cycles of length $k$ in a
uniformly random unicellular map of size $n$ and genus $g_n$.
Then, for all $M$, we have
\begin{align}\label{tc}
(C_{n,g_n}^{(1)},C_{n,g_n}^{(2)},\ldots,C_{n,g_n}^{(M)})
\dto \Po(\lambda_1,\lambda_2,\ldots,\lambda_M),  
\end{align}
with
\begin{align}\label{gli}
  \gl_\ell :=\frac{1}{2\ell}\Bigpar{\Bigpar{\frac{1+\tau}{1-\tau}}^\ell
+ \Bigpar{\frac{1-\tau}{1+\tau}}^\ell-2},
\end{align}
where 
$\tau\in(0,1)$ is the (unique) root of
  \begin{align}\label{taug}
\frac{(1-\tau^2)\bigpar{\log(1+\tau)-\log(1-\tau)}}{2\tau}=1-2\gth.
  \end{align}
\end{theorem}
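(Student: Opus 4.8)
The plan is to compute, for each fixed $k$, the asymptotics of the expected number of cycles of length $k$ and, more generally, of the factorial moments $\E\bigl[(C_{n,g_n}^{(1)})_{j_1}\cdots(C_{n,g_n}^{(M)})_{j_M}\bigr]$, and to show they converge to the corresponding products $\prod_\ell \gl_\ell^{j_\ell}$; by the standard moment criterion for convergence to independent Poisson variables this yields \eqref{tc}. The main tool is the bijection of \cite{CFF13} between unicellular maps of size $n$, genus $g$ and a suitable class of decorated (``C-decorated'') plane trees with $n$ edges, together with the known enumeration of unicellular maps. The key point is that a cycle of length $k$ in the map corresponds, under this bijection, to a controlled local configuration, so that counting embedded cycles reduces to counting marked structures in the decorated tree.

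First I would set up the enumerative input: recall the formula for the number $\eps_g(n)$ of unicellular maps of size $n$ and genus $g$ (Lehman--Walsh / Chapuy--Féray--Fusy), and its asymptotics in the linear-genus regime $g_n/n\to\gth$. This is where the parameter $\tau$ enters: $\tau$ should be the relevant exponential growth parameter, and equation \eqref{taug} should be exactly the saddle-point (or generating-function) relation characterizing it. Concretely, one expects a ratio like $\eps_{g_n}(n+O(1))/\eps_{g_n}(n)$ or $\eps_{g_n - j}(n)/\eps_{g_n}(n)$ to converge to an explicit power of a function of $\tau$, and the combination $\bigl(\tfrac{1+\tau}{1-\tau}\bigr)^\ell$ in \eqref{gli} strongly suggests that attaching or contracting a cycle of length $\ell$ changes the genus in a way whose enumerative cost is $\bigl(\tfrac{1\pm\tau}{1\mp\tau}\bigr)^{\pm\ell}$.

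Next, the core combinatorial step: express $\E[C_{n,g_n}^{(k)}]$ as a sum over potential cycles. I would pick a candidate cycle of length $k$ (a cyclic sequence of edges/vertices), decompose the map by cutting along it, and identify the resulting pieces as smaller unicellular maps (of smaller size and genus, with some boundary data). Summing the contributions, with the appropriate rooting/automorphism factors — this is the origin of the $\frac{1}{2\ell}$ in \eqref{gli} (orientation and starting-point of the cycle) and of the ``$-2$'' (subtracting degenerate/non-genuine cycles, e.g. those using a single edge twice, or the two ``trivial'' terms) — and applying the enumerative asymptotics from the first step, one should get $\E[C_{n,g_n}^{(k)}]\to\gl_k$. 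The same surgery performed on $j$ disjoint candidate cycles handles joint factorial moments and gives asymptotic independence, since cutting along disjoint cycles splits the map into independent pieces and the cross terms are negligible.

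The hard part will be this last step: making the cut-and-count surgery rigorous and uniform enough to control \emph{all} factorial moments (not just the first), including carefully handling overlapping or self-intersecting candidate cycles, short cycles sharing vertices, and the error terms in the enumerative asymptotics as the genus and size are perturbed by bounded amounts. One must check that configurations where the $j$ chosen cycles are not vertex-disjoint contribute $o(1)$, and that the enumeration formula is stable under the $O(1)$ perturbations of $(n,g_n)$ that the surgery produces — i.e. that $\eps_{g_n + a}(n + b)/\eps_{g_n}(n)$ has a clean limit for all fixed integers $a,b$, with enough uniformity to sum over the (growing number of) candidate cycles. Verifying that \eqref{taug} indeed has a unique root $\tau\in(0,1)$ for each $\gth\in(0,1/2)$, and that the limit of this ratio is the expected function of $\tau$, is a routine but essential analytic check that I would dispatch along the way.
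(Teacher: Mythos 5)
Your high-level frame --- factorial moments converging to $\prod_\ell\gl_\ell^{j_\ell}$ and hence a Poisson limit, with the Chapuy--F\'eray--Fusy bijection as the combinatorial entry point --- matches the paper. But the core of your plan, namely ``cut along a candidate cycle, recognize the pieces as smaller unicellular maps, and take ratios $\eps_{g+a}(n+b)/\eps_g(n)$,'' is both different from what the paper does and, as stated, does not work. Cutting a one-faced map along a non-contractible cycle does not produce smaller unicellular maps of the same enumerative family: one obtains a surface with boundary, and the pieces carry boundary and gluing data that take you outside the class counted by $\eps_g(n)$, so there is no clean product formula to which asymptotics of the counting sequence could be applied. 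The claimed independence of the pieces after cutting along several disjoint cycles is likewise exactly the statement that needs proof, and your sketch defers it. Your reading of \eqref{gli} is also off: the $-2$ is not a subtraction of degenerate cycles but falls out of a generating-function identity ($\sum_\ell\gl_\ell x^\ell=\log(1-x)-\tfrac12\log(1-\rho_+x)-\tfrac12\log(1-\rho_-x)$), and the symmetry factor that actually enters the computation is $1/(2s)$ per list of $s$ paths, not $1/(2\ell)$ per cycle.

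The paper avoids all surgery by exploiting that, under the bijection, a uniform unicellular map corresponds to an \emph{independent} pair $(\T,\sig)\in\cT_n\times\fsc_{n+1,n+1-2g_n}$: a cycle of length $\ell$ in the underlying graph is a list of pairwise disjoint paths in the tree of total length $\ell$ whose endpoints are matched by the cycles of $\sig$. The two coordinates are then analyzed separately --- path and subtree counts in a conditioned random plane tree on one side, and the cycle-length statistics of a uniform C-permutation with about $(1-2\gth)n$ cycles via a balls-in-boxes model on the other; it is the latter, not the asymptotics of $\eps_g(n)$, that produces $\tau$ and \eqref{taug}. The technical work your proposal names but does not supply is (i) showing that $k$ prescribed endpoint-merges occur with probability $(1+o(1))(2\gamma/n)^k$ with $\gamma=\tau^2/(1-\tau^2)$, (ii) bounding the contribution of configurations where paths share endpoints or the relevant permutation cycles coincide, and (iii) doing this uniformly over all joint factorial moments. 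Without these, the proposal is a plausible outline rather than a proof, and the specific surgery route would need to be replaced or substantially repaired.
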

The parameter $\tau$ here is the same as $\gb$ in \cite{ACCR13}.
It is easily seen that $\tau$ increases from $0$ to $1$ as $\gth$ increases
from $0$ to $1/2$.

This theorem is an analogue 
of  the result of~\cite{MP19} for the Weil--Petersson measure (a classical continuous model of random hyperbolic geometry) as the genus tends to infinity.
As an immediate corollary, we obtain the law of the systole of high genus unicellular maps:

\begin{corollary}\label{C0}
Let $\syst(n,g_n)$ be the systole of a uniformly random unicellular
map of size $n$ and genus $g_n$. Then, as \ntoo{} with $g_n/n\to\gth\in(0,1/2)$,
% for all $\ell$
%\[\P(\syst(n,g_n))=\ell)\to e^{-\lambda_1}e^{-\lambda_2}\ldots
%e^{-\lambda_{\ell-1}}(1-e^{-\lambda_{\ell}}).\] 
\begin{align}\label{co1}
  \syst(n,g_n)\dto Z,
\end{align}
where $Z$ is a random variable with, letting the $\gl_i$ be as in \eqref{gli},
\begin{align}\label{coz}
\P(Z=\ell)= 
e^{-\lambda_1}e^{-\lambda_2}\dotsm
  e^{-\lambda_{\ell-1}}(1-e^{-\lambda_{\ell}}),
\qquad \ell\ge1.
\end{align}
\end{corollary}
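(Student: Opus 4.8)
The plan is to derive \refC{C0} directly from \refT{thm_cycles} by observing that the systole of a unicellular map equals the girth of its underlying graph, since in a unicellular map every cycle is non-contractible. Hence $\syst(n,g_n)\ge \ell$ if and only if the map contains no cycle of length $<\ell$, i.e.\ $C_{n,g_n}^{(1)}=C_{n,g_n}^{(2)}=\dots=C_{n,g_n}^{(\ell-1)}=0$. Thus for each fixed $\ell\ge1$,
\begin{align}
  \P\bigpar{\syst(n,g_n)\ge \ell}
  =\P\bigpar{C_{n,g_n}^{(1)}=\dots=C_{n,g_n}^{(\ell-1)}=0}.
\end{align}

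Next I would apply \refT{thm_cycles} with $M=\ell-1$ (the case $\ell=1$ being trivial since $\syst\ge1$ always). The joint convergence \eqref{tc} to a vector of \emph{independent} Poisson variables $\Po(\gl_1),\dots,\Po(\gl_{\ell-1})$ implies, via the continuous mapping theorem applied to the (continuity) set $\{0\}^{\ell-1}\subseteq\bbZ_{\ge0}^{\ell-1}$ — or simply because convergence in distribution of integer-valued vectors implies convergence of each atom's probability — that
\begin{align}
  \P\bigpar{C_{n,g_n}^{(1)}=\dots=C_{n,g_n}^{(\ell-1)}=0}
  \to \prod_{i=1}^{\ell-1}\P\bigpar{\Po(\gl_i)=0}
  = \prod_{i=1}^{\ell-1}e^{-\gl_i}.
\end{align}
Therefore $\P(\syst(n,g_n)\ge\ell)\to e^{-\gl_1}\dotsm e^{-\gl_{\ell-1}}=:q_\ell$ for every fixed $\ell\ge1$ (with $q_1=1$). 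This already shows $\syst(n,g_n)\dto Z$ where $Z$ is the $\bbN$-valued variable with $\P(Z\ge\ell)=q_\ell$, and then $\P(Z=\ell)=\P(Z\ge\ell)-\P(Z\ge\ell+1)=e^{-\gl_1}\dotsm e^{-\gl_{\ell-1}}\bigpar{1-e^{-\gl_\ell}}$, which is \eqref{coz}.

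The only genuine point requiring a word of care — and the closest thing to an obstacle — is checking that the limiting $Z$ is a \emph{proper} random variable, i.e.\ that $q_\ell=\prod_{i=1}^{\ell-1}e^{-\gl_i}\to0$ as $\ell\to\infty$, equivalently $\sum_i\gl_i=\infty$. From \eqref{gli} we have $\gl_\ell\sim \tfrac{1}{2\ell}\bigpar{\tfrac{1+\tau}{1-\tau}}^\ell$, which grows exponentially since $\tau\in(0,1)$, so the series diverges and $q_\ell\to0$; hence $\sum_{\ell\ge1}\P(Z=\ell)=1$ and no mass escapes to infinity. (This also confirms that the convergence $\syst(n,g_n)\dto Z$ is genuine convergence in distribution to a tight limit, so that no tightness argument beyond this is needed.) I expect the write-up to be short: it is essentially the standard passage from Poisson cycle counts to the minimum-cycle-length law, familiar from the analogous result of \cite{MP19} and from random graph theory, the only map-specific input being the identification systole $=$ girth in the unicellular setting.
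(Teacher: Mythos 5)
Your argument is correct and is essentially the paper's own (implicit) one: the identification of systole with girth, the passage from the joint Poisson limit in Theorem~\ref{thm_cycles} to $\P(\syst\ge\ell)\to\prod_{i<\ell}e^{-\gl_i}$, and the check that $\sum_\ell\gl_\ell=\infty$ (so $Z$ is proper) are exactly the points the paper relies on. Nothing further is needed.
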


It follows from \eqref{gli} that $\sum_\ell\gl_\ell=\infty$, 
and thus the probabilities in \eqref{coz} add up to $1$.
Therefore $Z$ is a proper distribution,
and, in particular, $\syst(n,g_n)$ is bounded in probability.

\subsection*{Vertex degrees.}
The methods we use also allow us to control the vertex degrees in the map:

\begin{theorem}\label{TD}
  Let $(g_n)$ be a sequence with $g_n/n\to\gth\in(0,1/2)$ as \ntoo.
Let $\NNng{k}$ be the number of vertices of degree $k\ge1$ in a uniformly
random unicellular map of size $n$ and genus $g_n$,
and let $m:=n+1-2g_n$ be the total number of vertices.
Then, as \ntoo,
\begin{align}\label{td}
  \NNng{k}/m\pto \kk_k,
\qquad k\ge1,
\end{align}
where
$(\kk_k)\xoo$ is a probability distribution given by 
\begin{align}
  \label{td2}
\kk_k=\frac{1}{\bigpar{\log(1+\tau)-\log(1-\tau)}k}
%\frac{1}{k}
\Bigpar{\Bigparfrac{1+\tau}{2}^k-\Bigparfrac{1-\tau}{2}^k},
\qquad k\ge1,
\end{align}
with $\tau$ given by \eqref{taug}.
\end{theorem}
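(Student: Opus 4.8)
The plan is to derive Theorem~\ref{TD} from the bijection of \cite{CFF13} between unicellular maps of size $n$ and genus $g$ and $C$-decorated trees, exactly as in the companion arguments for \refT{thm_cycles}. A unicellular map of size $n$ and genus $g$ corresponds to a plane tree with $n$ edges (hence $n+1$ vertices and $m=n+1-2g$ blocks after the gluing), together with a way of merging the tree's vertices into $m$ groups according to a fixed-point-free involution structure counted by the Euler characteristic. The key point is that the degree of a vertex in the map is the \emph{sum} of the plane-tree degrees of the tree vertices merged into it. So I would first set up, as in \cite{ACCR13} and in the proof of \refT{thm_cycles}, the decomposition of the uniform measure: conditionally on the underlying plane tree $T_n$ (which converges, after the appropriate re-rooting, to a size-biased Galton--Watson-type tree whose offspring distribution is governed by $\tau$) and on the partition of its $n+1$ vertices into $m$ labeled blocks, the law is uniform. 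The analysis of \cite{ACCR13} already identifies the limiting block-size structure and shows that the proportion $m/n\to 1-2\gth$, together with the fact that a typical block contains a number of tree-vertices whose generating function is exactly $\tfrac12\bigpar{\log(1+\tau)-\log(1-\tau)}$-normalized.

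Second, I would compute the expectation $\E \NNng{k}$. By linearity and exchangeability of the blocks, $\E\NNng{k}=m\,\P(\text{a fixed block has map-degree }k)$. Using the bijective description, the probability that a given block has degree $k$ equals the probability that the tree-vertices assigned to it have plane-tree-degrees summing to $k$; in the $n\to\infty$ limit the block picks up tree-vertices in an i.i.d.\ geometric-type fashion with parameter $\tau$, so this probability converges to $\kk_k$ as in \eqref{td2}. Concretely, the generating function $\sum_k \kk_k z^k$ should work out to $\bigpar{\log(1+\tau)-\log(1-\tau)}\qw\bigpar{\log\tfrac{2}{2-(1+\tau)z}-\log\tfrac{2}{2-(1-\tau)z}}$, whose coefficient extraction gives \eqref{td2}; that $(\kk_k)$ is a probability distribution follows by evaluating at $z=1$ and using the defining equation \eqref{taug} for $\tau$. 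This establishes $\E\bigpar{\NNng{k}/m}\to\kk_k$.

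Third, to upgrade convergence of the mean to convergence in probability I would bound the variance, i.e.\ show $\Var\bigpar{\NNng{k}/m}\to 0$. Since $\NNng{k}=\sum_{\text{blocks }B}\indic{\deg B=k}$, I compute $\E\bigsqpar{\NNng{k}(\NNng{k}-1)}=m(m-1)\P(\deg B_1=k,\deg B_2=k)$ for two distinct blocks, and show this factorizes asymptotically as $m^2\kk_k^2(1+o(1))$; the correlation between two blocks is weak because the tree is locally tree-like and the block assignments become asymptotically independent, so Chebyshev's inequality then gives \eqref{td}. Alternatively, one can invoke an exchangeable-pairs or negative-association argument, or simply note that $\NNng{k}$ changes by $O(1)$ under a single switch in the decorated-tree data and apply a bounded-differences concentration inequality — but the variance estimate via the pair-correlation computation is the cleanest route given the machinery already built for \refT{thm_cycles}.

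The main obstacle I expect is the second moment / decorrelation step: one must show that two distinct blocks behave asymptotically independently, which requires genuine control of the combinatorial structure from \cite{CFF13,ACCR13} rather than a soft argument, and in particular uniform integrability type bounds so that the error terms in the joint probability really are $o(1)$ uniformly in the (finitely many) values of $k$ under consideration. A secondary technical point is handling the re-rooting/size-biasing correctly so that the limiting offspring law and block structure are the ones dictated by $\tau$; this is where the hypothesis $\gth\in(0,1/2)$, equivalently $\tau\in(0,1)$, is used to keep all the relevant generating functions convergent. Once these are in place, the identification of the closed form \eqref{td2} and the verification that $\sum_k\kk_k=1$ are routine generating-function manipulations using \eqref{taug}.
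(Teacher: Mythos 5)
Your proposal follows essentially the same route as the paper: pass to C-decorated trees, observe that the degree of a map-vertex is the sum of the plane-tree degrees over a cycle of the C-permutation, identify the limit as the compound law $\sum_{i=1}^{M}Y_i$ (whose generating function is exactly the composition you write down), and upgrade convergence of the mean to convergence in probability by a pair-correlation/Chebyshev estimate — which is precisely what the paper's \refL{LD} does, followed by a Skorokhod coupling to absorb the randomness of the tree and of the permutation. One slip of emphasis only: no re-rooting or size-biased local limit is needed, and $\tau$ does not govern the tree degrees (the empirical degree distribution of a uniform plane tree is asymptotically $\Ge(1/2)$, independent of $\tau$); $\tau$ enters solely through the cycle-length distribution of the uniform C-permutation (Proposition~\ref{T1}), but since your final generating function is the correct composition $H_\pi\bigpar{x/(2-x)}$ this does not affect the argument.
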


\begin{remark}
\refT{TD} is related to  \cite[Theorem 4]{ACCR13}, which gives the
asymptotic distribution of the root degree of the map.

To see the connection, note first that
as a corollary of \refT{TD}, the degree of a uniformly
random vertex in the random  unicellular map converges in distribution to
the  distribution   $(\kk_k)$
given by \eqref{td2}. 
Since the number of edges and vertices in the map
are deterministic, this means that if we consider 
a uniformly random unicellular map of size and genus as above, but rooted at
a vertex instead of at an oriented edge, then the root degree distribution 
converges to $(\kk_k)$. 
It follows easily that for random unicellular maps rooted at an oriented edge, as in
\cite{ACCR13}  and the present paper, the root degree distribution converges
to the size biased 
distribution $(ck\kk_k)$, where $c$ is a normalization factor (easily computed);
this is \cite[Theorem 4]{ACCR13}. 
\end{remark}

\subsection*{Structure of the paper.} 
We end this section with a list of the main notations appearing in this paper. 
In section~\ref{Sdef}, we give definitions. 
Section~\ref{S:tree} states previous results on paths in trees that we use,
and Section~\ref{S:perm} is devoted to results about cycles in \cperm{s}.
In Section~\ref{S:ctree}, we prove
Theorem~\ref{thm_cycles} and finally Section~\ref{Sdegree} contains the
proof of Theorem~\ref{TD}. 
\subsection*{Index of notations}
This paper is quite notation heavy; we list here the main notations that appear throughout the paper.

\begin{itemize}
\item $g_n, \theta$: parameters of our model, $(g_n)_n\geq 1$ is the sequence of genuses of the unicellular maps we consider, with $\frac{g_n} n \to \theta\in(0,1/2)$;
\item $\tau, (\lambda_\ell)_{\ell\geq 1}$: parameters uniquely determined by $\theta$ (in~\eqref{taug} and~\eqref{gli});
\item $\cT_n$: the set of plane trees on $n$ edges;
\item $\T$: a random uniform element in $\cT_n$ (depends implicitly on $n$);
\item $\fsc_{n,m}$: the set of \cperm{s} on $n$ elements and $m$  cycles;
\item $\sig$: a random uniform element in $\fsc_{n+1,n+1-2g_n}$ (depends implicitly on $n$);
\item $T$: denotes a fixed rooted tree;
\item $\bt$: denotes a fixed unrooted tree;
\item $\Npl(T)$: number of paths of length $\ell$ in $T$;
\item $N_\bt(T)$: number of occurrences of $\bt$ in $T$;
\item $(T_n)_{n\geq 1}$: denotes a deterministic sequence of trees such that $T_n\in \cT_n$;
\item $C_{n,g_n}^{(k)}$: the number of %simple ?? 
cycles of length $k$ in a
uniformly random unicellular map of size $n$ and genus $g_n$ (alternatively, in the underlying graph of $(\T,\sig)$);
\item $\Ext$: will be used to denote the extremities of a path (or of a list of paths);

\item $\bP$: denotes a list of pairwise disjoint paths;
\item $s(\bP)$: number of paths in $\bP$;
\item $\ell(\bP)$: total length of the paths of $\bP$;
\item $\mathbf{m}$: denotes the list of lengths of a list of paths.

\end{itemize}
\section{Definitions}\label{Sdef}

A unicellular map of \emph{size} $n$ is a $2n$-gon whose sides were glued two by
two to form a (compact, connected, oriented) surface. The \emph{genus} of
the map is the genus of the surface created by the gluings (its number of
handles). After the gluing, the sides of the polygon become the \emph{edges}
of the map, and the vertices of the polygon become the \emph{vertices} of
the map. 
Note that the number of edges equals the size $n$.
By Euler's formula, a unicellular map of genus $g$ and size $n$ has $n+1-2g$
vertices. The underlying graph of a unicellular map is the graph obtained
from this map by only remembering its edges and vertices.
(In general, this is a multigraph.)

We consider in this paper only
\emph{rooted} unicellular maps, where an oriented edge is marked as the
\emph{root}. 
The underlying graph is then a rooted graph.

A rooted unicellular map of genus $0$ is the same as a plane tree
(i.e., an ordered rooted tree). 
We denote by $\cT_n$ the set of plane trees of size $n$ (and thus $n+1$
vertices). 

A \emph{path} $\cP$ of length $\ell\ge0$ in a (multi)graph
is a list $(v_0,e_1,v_1,e_2,\ldots,e_\ell,v_\ell)$ of 
alternating distinct\footnote{with this definition, such objects are also
  commonly called simple paths.} 
vertices and edges such that for all $1\leq i\leq \ell$, $v_{i-1}$ and $v_i$
are joined by the edge $e_i$. 
We define $\start(\cP):=v_0$ and $\eend(\cP):=v_\ell$, and let
$\Ext(\cP):=\set{\start(\cP),\eend(\cP)}$. 
Similarly, a \emph{cycle} of length $\ell\ge1$ is 
a list $(v_0,e_1,v_1,e_2,\ldots,e_\ell)$ of distinct vertices and edges
such that
$(v_0,e_1,v_1,e_2,\ldots,v_{\ell-1})$ is a path and $e_\ell$ is an edge
joining $v_{\ell-1}$ to $v_0$.
In a simple graph (for example, a tree), the edges $e_i$ in a path or cycle
are determined by the vertices, so it suffices to specify $(v_0,v_1,\dots)$.
We denote the length of a path $p$ by $|p|$; this is thus the number of
edges in $p$.

A \emph{\cperm{}} is a permutation whose cycles are of odd length.
Let $\fsc_n$ be the set of \cperm{s} of length $n$, and $\fsc_{n,m}$ the
subset of permutations in $\fsc_n$ with exactly $m$ cycles.
(This is empty unless $n\equiv m \pmod 2$; we assume tacitly in the sequel
that we only consider cases with $\fsc_{n,m}\neq\emptyset$.)
Note that our definition of a \cperm{} differs from the one given in \cite{CFF13}, where each cycle carries an additional sign. Here we do not include the signs as they will not play a role in our proofs.

A \emph{\ctree{}} of size $n$ and genus $g$ is a pair $(T,\sigma)\in \cT_n\times \fsc_{n+1,n+1-2g}$ where $\sigma$ is seen as a \cperm{} of the vertices of $T$. The underlying graph of $(T,\sigma)$ is the graph obtained by merging the vertices of $T$ that belong to the same cycle in $\sigma$. If $v,v'\in T$, we write $v\sim v'$ if $v$ and $v'$ belong to the same cycle in $\sigma$.

\begin{theorem}[\cite{CFF13}, Theorem 5]\label{thm_ctrees}
Unicellular maps of size $n$ and genus $g$ are in $2^{2g}$ to $1$ correspondence  with \ctree{s} of size $n$ and genus $g$. This correspondence preserves the underlying graph.
\end{theorem}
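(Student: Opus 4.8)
The plan is to prove this via the explicit gluing construction of Chapuy--F\'eray--Fusy, realizing it as a well-defined surjection $\Phi$ from \ctree{s} onto unicellular maps whose fibers all have cardinality $2^{2g}$. First I would make $\Phi$ precise: starting from the plane tree $T\in\cT_n$ embedded in the sphere, with its unique face bounded by the length-$2n$ contour walk, I merge, for each cycle $c=(v_1,\dots,v_{2k+1})$ of $\sigma$, the $2k+1$ vertices into a single vertex by a canonical cyclic re-gluing of the corners around the $v_i$, leaving the edges untouched. Since this is exactly the vertex-identification in the definition of the underlying graph of $(T,\sigma)$, the assertion that $\Phi$ preserves the underlying graph is then immediate; the real work is to control the resulting surface.

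The heart of the argument is a parity lemma: performing this cyclic re-gluing on a single cycle keeps the map \emph{unicellular} (one face) precisely when the cycle has odd length, whereas an even cycle would split the boundary into two faces or destroy orientability. I would prove this by tracking how the contour walk is re-threaded at the merged vertex: the interleaving of the $2k+1$ corner-sequences produces a single closed walk if and only if $2k+1$ is odd, which reduces to a direct computation on the induced permutation of the edge-sides. Granting this, each cycle of length $2k+1$ lowers the vertex count by $2k$ while keeping $F=1$ and $E=n$, so after gluing all $m=n+1-2g$ cycles Euler's formula gives $m-n+1=2-2g'$, i.e. $g'=g$. Hence $\Phi$ lands in unicellular maps of size $n$ and genus $g$, and a \ctree{} of genus $g$ (defined through its number of cycles) really does produce a map of genus $g$.

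It then remains to show that $\Phi$ is surjective and that every fiber has exactly $2^{2g}$ elements. Here I would use the reverse ``slicing'' operation, which splits a vertex of the map back into a cyclically glued group of tree-vertices; Chapuy's analysis shows these splittings are governed by the \emph{trisections} of the contour walk. Performing them canonically reconstructs, from any unicellular map, a plane tree together with the odd-cycle permutation recording which tree-vertices were merged, which gives surjectivity. The multiplicity is pinned down by counting the independent binary choices in this reconstruction: since the cycle lengths satisfy $\sum_c(|c|-1)=(n+1)-m=2g$, there are exactly $2g$ such bits---these are the sign decorations that \cite{CFF13} attach to the cycles---so each map has precisely $2^{2g}$ preimages. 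Combining well-definedness, surjectivity, and constant fiber size $2^{2g}$ yields the stated $2^{2g}$-to-$1$ correspondence.

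The step I expect to be the main obstacle is the parity lemma together with the fiber count: one must set up the corner and edge-side bookkeeping carefully enough both to show that odd cyclic gluings preserve a single face (and orientability), and to verify that the splitting operation is genuinely reversible and contributes exactly $2^{2g}$ choices. A useful cross-check is the enumerative identity $|\cT_n|\cdot|\fsc_{n+1,\,n+1-2g}|=2^{2g}\cdot(\text{number of genus-}g\text{ unicellular maps of size }n)$, which follows from the Lehman--Walsh and Harer--Zagier formulas and must agree with the bijective count produced above.
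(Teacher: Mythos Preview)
This theorem is not proved in the paper; it is quoted from \cite[Theorem~5]{CFF13} and used as a black box, so there is no in-paper proof to compare your attempt against. Your outline does follow the broad architecture of the Chapuy--F\'eray--Fusy construction (glue tree-vertices along each odd cycle of $\sigma$, check that unicellularity is preserved, read off the genus via Euler's formula, invert by vertex-splitting), which is the right shape.

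That said, your fiber-count paragraph contains a genuine confusion. You claim the $2^{2g}$ preimages come from ``$2g$ bits'' which ``are the sign decorations that \cite{CFF13} attach to the cycles''. But the signs in \cite{CFF13} are one per cycle, and there are $m=n+1-2g$ cycles, not $2g$; those signs contribute a factor $2^{m}=2^{n+1-2g}$, not $2^{2g}$. The present paper explicitly remarks, just before stating the theorem, that it has \emph{removed} the signs from the definition of a C-permutation; the $2^{2g}$ appearing here is the residual multiplicity after that removal, not the sign count itself. You appear to be conflating the CFF13 sign mechanism with Chapuy's earlier recursive trisection bijection, in which $2g$ binary choices do arise directly --- but that is a different construction, and invoking both at once leaves the fiber size unjustified. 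To turn your sketch into an actual proof you would need to identify precisely which degrees of freedom in the unsigned C-tree collapse under your map $\Phi$, and why they number exactly $2^{2g}$.
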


Therefore, with this correspondence, it is sufficient to study \ctree{s}.

More precisely, from now on, we fix a number $\theta \in (0,1/2)$ and a
sequence $(g_n)_{n\geq 1}$ such that $\frac{g_n}{n}\to \theta$.

Now, we define (for each $n$, with $n$ only implicit in the notation) 
$\T$ to be a uniformly random tree in $\cT_n$, and $\sig$
to be a uniformly random C-permutation  in $\fsc_{n+1,n+1-2g_n}$,
with $\T$ and $\sig$ independent. From now on,
thanks to Theorem~\ref{thm_ctrees}, we can assume that $C_{n,g_n}^{(k)}$
is the number of cycles of size $k$ in the underlying graph of $(\T,\sig)$.

\subsection{Further notation}\label{Snot}

The multivariate Poisson distribution $\Po(\gl_1,\dots,\gl_M)$, 
%where $\gl_i\ge0$,
is the distribution of a random vector $(X_1,\dots,X_M)$ with independent
Poisson distributed components $X_i\sim\Po(\gl_i)$.

$(n)_r$ denotes the descending factorial $n(n-1)\dotsm(n-r+1)$.

$C$ and $c$ denote unspecified constants that may vary from one occurrence
to the next. $C_r$ denotes a constant depending on $r$.

Unspecified limits are as \ntoo.

\section{Paths in trees}\label{S:tree}

The following lemma helps us estimate the number of paths of a certain
length in the random tree $\T$.
In general, for a rooted tree $T$,
we let $\Npl(T)$ be the number of  paths of length $\ell$ in $T$.

\begin{lemma}[\cite{SJ355}, Example 4.3, (4.9)]\label{lem_paths}
Let, as above, $\T$ be a uniformly random tree in $\cT_n$.
Then, as \ntoo,
for every fixed $\ell\ge1$, 
%let $P_{\ell}^{(n)}$ be the number of paths of length $\ell$ in $\T$. Then 
\begin{equation}\label{lep1}
\frac{\Npl(\T)}{n}\pto 2\ell.
\end{equation}
\end{lemma}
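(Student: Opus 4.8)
The plan is to recognise $\Npl$ as a \emph{local} (fringe) additive functional of the tree and then invoke the law of large numbers for such functionals of conditioned Galton--Watson trees; this is essentially the content of the cited result \cite{SJ355}, and I sketch the argument. Recall that a uniformly random tree $\T\in\cT_n$ is a critical Galton--Watson tree with geometric offspring law $p_k=2^{-k-1}$, $k\ge0$ (mean $1$, variance $2$), conditioned to have $n$ edges; write $\cG$ for the corresponding unconditioned Galton--Watson tree, with root $\rho$. The structural observation is that every directed path $\cP=(v_0,e_1,\dots,e_\ell,v_\ell)$ of length $\ell$ in a rooted tree has a well-defined \emph{top} vertex $w$, the lowest common ancestor of $v_0$ and $v_\ell$: the path is the ascending chain from $v_0$ up to $w$ followed by the descending chain from $w$ down to $v_\ell$, and unless $\cP$ or its reversal is simply a descending chain, these two chains leave $w$ through distinct children of $w$. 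Hence, defining $g_\ell(S)$ for a rooted tree $S$ to be the number of directed length-$\ell$ paths of $S$ whose top vertex is the root of $S$ --- a quantity determined by the first $\ell$ generations of $S$ --- we have, writing $T_v$ for the fringe subtree of a rooted tree $T$ at a vertex $v$,
\[
\Npl(T)=\sum_{v}g_\ell(T_v).
\]

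Thus $\Npl$ is an additive functional of $\T$ with local toll function $g_\ell$. Since the offspring law is geometric it has finite moments of all orders, hence so do the generation sizes of $\cG$ and therefore $g_\ell(\cG)$, so all the integrability one needs is available; the law of large numbers for additive functionals of finite-variance conditioned Galton--Watson trees --- equivalently, Aldous' fringe-tree convergence together with a second-moment bound, as in \cite{SJ355} --- then gives $\frac{1}{n+1}\sum_v g_\ell(T_v)\pto\E[g_\ell(\cG)]$, and hence $\Npl(\T)/n\pto\E[g_\ell(\cG)]$. It remains to evaluate $\E[g_\ell(\cG)]$. Split $g_\ell(\cG)$ according to whether the path is straight or bent at $\rho$. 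A straight path with top $\rho$ is one of the two orientations of a descending chain of length $\ell$ from $\rho$; the number of such chains equals the size $Z_\ell$ of generation $\ell$ of $\cG$, with $\E[Z_\ell]=1$ by criticality, so straight paths contribute $2$. A bent path with top $\rho$ selects an ordered pair of distinct children $(c,c')$ of $\rho$ and descendants at depths $d,d'\ge0$ below $c$ and $c'$ respectively with $d+d'=\ell-2$; as the subtrees of $c$ and $c'$ are independent copies of $\cG$, these contribute $\E[\xi(\xi-1)]\cdot(\ell-1)$, where $\xi$ is the offspring variable and $\ell-1$ is the number of admissible pairs $(d,d')$. Since $\E[\xi(\xi-1)]=\E[\xi^2]-\E[\xi]=3-1=2$, this equals $2(\ell-1)$, and altogether $\E[g_\ell(\cG)]=2+2(\ell-1)=2\ell$.

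The only genuinely nontrivial step is the law of large numbers \emph{in probability}, as opposed to in expectation (which follows at once from fringe-tree convergence and uniform integrability): the upgrade rests on the variance bound $\Var(\Npl(\T))=o(n^2)$, which morally expresses the asymptotic independence of fringe subtrees rooted at vertices far apart in $\T$, but for a \emph{conditioned} Galton--Watson tree this needs the general machinery of \cite{SJ355} (or a direct two-vertex second-moment estimate). The path decomposition and the arithmetic above are routine.
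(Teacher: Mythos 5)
Your argument is correct and is essentially the argument behind the cited result: the paper itself gives no proof of this lemma but imports it from \cite{SJ355} (Example 4.3, (4.9)), where the count of length-$\ell$ paths is likewise treated as a subtree-count/additive functional of the conditioned Galton--Watson tree with geometric offspring law, evaluated via fringe-tree convergence. Your decomposition by the top vertex, the computation $\E[g_\ell(\cG)]=2\E[Z_\ell]+(\ell-1)\E[\xi(\xi-1)]=2\ell$, and your identification of the second-moment bound as the one step requiring the machinery of \cite{SJ355} all match what that reference provides.
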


To estimate the numbers of intersecting paths, we will also use the following,
more general, result.
(We count paths as oriented, but general subtrees $\bt$ as unlabelled.)

\begin{lemma}[\cite{SJ355}]\label{LNt}
Let, as above, $\T$ be a uniformly random tree in $\cT_n$.
Let $\bt$ be a fixed unrooted tree, and let $N_\bt(\T)$ be the number of
subtrees of $\T$ isomorphic to $\bt$.
Then, as \ntoo,
\begin{equation}\label{lep2}
\frac{N_\bt(\T)}{n}\pto c_\bt,
\end{equation}
for some constant $c_\bt\in(0,\infty)$.
\end{lemma}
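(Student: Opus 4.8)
The plan is to realise the uniform plane tree as a conditioned \GW{} tree and to treat $N_\bt$ as an additive functional whose toll function has bounded depth, for which a law of large numbers is available. Recall first the classical fact that a uniformly random $\T\in\cT_n$ has the law of a \GW{} tree with offspring distribution $\P(\xi=k)=2^{-k-1}$, $k\ge0$, conditioned to have $n+1$ vertices (indeed, under this \GW{} measure every plane tree on $n+1$ vertices has the same probability $2^{-2n-1}$, since for a tree $t$ one has $\prod_{v\in t}2^{-d^+(v)-1}=2^{-(2|t|-1)}$). Write $T_v$ for the fringe subtree of a rooted tree $T$ at a vertex $v$. Given an occurrence $S$ of $\bt$ in $T$ (a subgraph isomorphic to $\bt$), call its \emph{top vertex} the vertex $v$ of $S$ closest to the root of $T$; since $T$ is a tree and $S$ is connected, $v$ is unique and every vertex of $S$ is a descendant of it (a short argument using least common ancestors), so $S\subseteq T_v$ and $S$ has height at most $D:=\diam(\bt)$ in $T_v$. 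Conversely every occurrence is recovered from its top vertex together with the rooted subtree it induces in $T_v$. Hence
\[
N_\bt(T)=\sum_{v\in V(T)}g(T_v),
\]
where $g(T')$ counts the rooted subtrees of $T'$ that contain the root of $T'$ and whose underlying unrooted tree is isomorphic to $\bt$; crucially, $g(T')$ depends on $T'$ only through its first $D$ generations.

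Next I would identify the constant. Since $\P(\xi=k)=2^{-k-1}$ has exponential tails, all generation sizes of the unconditioned \GW{} tree $\cT$ have moments of every order, and $g(T')$ is bounded by a fixed polynomial in the sizes of the first $D$ generations of $T'$, so $\E[g(\cT)^2]<\infty$. By the standard fringe-subtree convergence for conditioned \GW{} trees, the fringe subtree at a uniformly random vertex of $\T$ converges in distribution to $\cT$, and by the moment bound the convergence also holds in $L^1$. Writing $N_\bt(\T)=(n+1)\,\E[g(T_V)\mid\T]$ for $V$ a uniformly random vertex, this yields $\E N_\bt(\T)/n\to c_\bt:=\E[g(\cT)]$. (Alternatively, the same asymptotics for $\E N_\bt(\T)$ follow from singularity analysis: the generating function $\sum_n(\sum_{T\in\cT_n}N_\bt(T))x^n$ is an explicit algebraic function of the plane-tree generating function, with a square-root singularity at its radius of convergence.)

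To upgrade the convergence of the mean to convergence in probability it suffices to show $\Var\big(N_\bt(\T)\big)=o(n^2)$, and in fact one obtains $O(n)$. This is the usual two-point estimate for local additive functionals of conditioned \GW{} trees: writing $N_\bt(\T)^2=\sum_{u,v}g(T_u)g(T_v)$ and splitting according to whether the depth-$D$ neighbourhoods of $u$ and $v$ in $\T$ are disjoint, the non-disjoint pairs number $O(n)$ and contribute $O(n)$ (each term being $O(1)$ in expectation by the moment bound), while the disjoint pairs factorise up to a multiplicative $1+O(1/n)$ error, reproducing $(\E N_\bt(\T))^2+O(n)$. This is precisely the additive-functional law of large numbers of \cite{SJ355}, of which \refL{lem_paths} is the special case $\bt=$ a path, and which we may simply invoke; it gives $N_\bt(\T)/n\pto c_\bt$. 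Finally, finiteness of $c_\bt$ is the moment bound above, and $c_\bt>0$ because with positive probability the first $D$ generations of $\cT$ already contain a fixed plane realisation of $\bt$ rooted at one of its vertices (every finite plane tree occurs with positive probability, since $\P(\xi=k)>0$ for all $k$), so that $g(\cT)\ge1$ with positive probability.

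The step I expect to be the real obstacle is the variance bound: quantifying that vertex pairs with far-apart neighbourhoods decorrelate at rate $O(1/n)$. This is exactly where the structural estimates for conditioned \GW{} trees from \cite{SJ355} are needed; the combinatorial reduction of the first paragraph and the positivity argument are routine by comparison.
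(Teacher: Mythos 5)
The paper does not actually prove this lemma: it is imported verbatim from \cite{SJ355}, so there is no internal argument to compare against. Your sketch correctly reconstructs the route taken in that reference — realise $\T$ as a \GWt{} with $\Ge(1/2)$ offspring conditioned on its size, reduce $N_\bt$ to the bounded-depth additive functional $\sum_v g(T_v)$ via the top-vertex decomposition, get the mean from fringe-subtree convergence and the concentration from a second-moment estimate — and your identification of the one genuinely delicate point (the decorrelation of $g(T_u)$ and $g(T_v)$ under the size conditioning) is accurate. The only step stated more casually than it deserves is the passage from convergence in distribution of the fringe subtree to convergence of $\E N_\bt(\T)/n$: you verify moments of $g$ only for the unconditioned limit tree, whereas uniform integrability of $g(T_V)$ under the conditioning on total size is what is actually needed; this is a standard estimate supplied by \cite{SJ355} and the fringe-tree literature, so the proposal is sound as an outline.
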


\section{Cycles in \cperm{s}}\label{S:perm}

  Let $\gs_{n,m}$ be a uniformly random element of $\fsc_{n,m}$, 
and let $\Nknm$ be its number of cycles of length $k$.
Thus $\Nknm=0$ unless $k\ge1$ is odd.
Note also that
\begin{align}\label{eq_constraint_cycles}
 \sum_k \Nknm = m, &&&
\sum_k k\Nknm = n.
\end{align}

\begin{proposition}\label{T1}
Let $0<\ga<1$ and suppose that $m,n\to\infty$ with $m/n\to\ga$.
Then, for each $k\ge1$,
\begin{align}\label{t1}
  \frac{\Nknm}m \pto \pi_k,
\end{align}
where $(\pi_k)$ is a probability distribution 
supported
on \set{1,3,5,\dots}
given by
\begin{align}\label{pi}
  \pi_k:=
  \begin{cases}
    \frac{1}{\Phi(\tau)}\frac{\tau^k}{k}, & \text{$k$ odd},
\\
0,& \text{$k$ even},
  \end{cases}
\end{align}
where $\tau\in(0,1)$ is the (unique) root of
  \begin{align}\label{tau}
\frac{(1-\tau^2)\bigpar{\log(1+\tau)-\log(1-\tau)}}{2\tau}=\ga
  \end{align}
and $\Phi(\tau)$ is a normalizing constant given by \eqref{Phi} below.

Furthermore, for any $r\ge0$,
\begin{align}\label{surr}
  \frac{1}{m}\sum_k k^r \Nknm \pto \sum_k k^r \pi_k,
\end{align}
with convergence of all moments.

Finally, there exists a constant $C_\alpha$ such that, if we define the event 
\begin{equation}\label{eq_PE}
\mathcal{E}:=
\set{\text{The largest cycle of $\gs_{n,m}$ is shorter than $C_\alpha\log^2 n$}}
\end{equation}
then 
\begin{equation}
\P(\mathcal{E})\geq 1-n^{-\log n}.
\end{equation}

\end{proposition}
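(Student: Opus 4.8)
The plan is to obtain \eqref{t1} and \eqref{surr} via the classical exponential-generating-function machinery for permutations with cycle-length constraints, combined with a saddle-point (or singularity) analysis that identifies $\tau$ as the natural parameter. The exponential generating function of $C$-permutations (permutations all of whose cycles are odd) is $\exp\bigl(\sum_{k\text{ odd}} z^k/k\bigr) = \sqrt{(1+z)/(1-z)}$, and adding a variable $u$ marking the total number of cycles gives $\bigpar{(1+z)/(1-z)}^{u/2}$, while adding a variable $w$ marking cycles of a fixed length $k$ replaces $z^k/k$ by $w z^k/k$ in the exponent. First I would set up $\abs{\fsc_{n,m}}$ as a double coefficient extraction $n!\,[z^n u^m]\bigpar{(1+z)/(1-z)}^{u/2}$, and more generally get the joint factorial moments of the $\Nknm$ from the corresponding multivariate GF. Since $m/n\to\ga$, a saddle point $z=\tau$ appears, determined precisely by setting the logarithmic derivative of the relevant GF at $z=\tau$ proportional to $\ga$; a short computation shows this is exactly equation \eqref{tau}, and the normalizing constant $\Phi(\tau)$ in \eqref{Phi} is the value of the (odd part of the) exponential there, i.e.\ $\Phi(\tau)=\thalf\bigpar{\log(1+\tau)-\log(1-\tau)}$ or the natural variant thereof.

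Next I would prove the concentration statement \eqref{t1} by the method of moments: computing $\E\bigsqpar{(\Nknm)_r}$ — the $r$-th factorial moment — amounts to selecting $r$ cycles of length $k$ and counting $C$-permutations on the remaining $n-kr$ points with $m-r$ cycles, which gives
\begin{equation*}
\E\bigsqpar{(\Nknm)_r} = \frac{(n)_{kr}}{k^r}\cdot\frac{\abs{\fsc_{n-kr,m-r}}}{\abs{\fsc_{n,m}}}.
\end{equation*}
Using the asymptotics of $\abs{\fsc_{n,m}}$ from the saddle-point analysis, the ratio $\abs{\fsc_{n-kr,m-r}}/\abs{\fsc_{n,m}}$ behaves like $\tau^{kr}\,m^{-r}\,n^{-kr}\,k^r\cdot(1+o(1))$ after accounting for the $(n)_{kr}\sim n^{kr}$ factor, so $\E\bigsqpar{(\Nknm/m)_r}\to(\tau^k/(\Phi(\tau)k))^r = \pi_k^r$, which is the $r$-th factorial moment of a constant; hence $\Nknm/m\pto\pi_k$. (One checks $\sum_{k\text{ odd}}\pi_k=1$ directly from $\sum_{k\text{ odd}}\tau^k/k=\thalf\log\frac{1+\tau}{1-\tau}=\Phi(\tau)$, and that the two identities in \eqref{eq_constraint_cycles} pass to the limit, which also yields \eqref{surr} for $r=0,1$.) For general $r$ in \eqref{surr}, I would bound $\sum_k k^r\Nknm$ using the same factorial-moment estimates together with the tail control from the final part, so that the contribution of large cycles is negligible and the sum concentrates on its mean $\sum_k k^r\pi_k$; convergence of all moments follows from uniform integrability, itself a consequence of the tail bound.

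For the last statement, the event $\cE$ that the longest cycle has length below $C_\ga\log^2 n$: I would estimate $\P(\Nknm\ge1)\le\E\Nknm = \frac{(n)_k}{k}\cdot\frac{\abs{\fsc_{n-k,m-1}}}{\abs{\fsc_{n,m}}}$. The key point is a uniform (in $k$, up to $k\le n$) upper bound of the shape $\E\Nknm \le C\,\rho^{k}\,m$ for some $\rho<1$ — morally $\rho$ is close to $\tau$, but one must argue uniformly in $k$ rather than for fixed $k$, which is where the real work lies. Given such a bound, summing over $k\ge C_\ga\log^2 n$ gives $\P(\cE^c)\le\sum_{k\ge C_\ga\log^2 n} C\rho^k m \le C m\,\rho^{C_\ga\log^2 n}/(1-\rho)$, and choosing $C_\ga$ large enough (depending on $\rho$, hence on $\ga$) makes this $\le n^{-\log n}$ since $\rho^{c\log^2 n} = n^{-c'\log n}$. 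The main obstacle is precisely obtaining the ratio bound $\abs{\fsc_{n-k,m-1}}/\abs{\fsc_{n,m}}$ uniformly over the whole range $1\le k\le n$ (including $k$ comparable to $n$, where the saddle point for $\fsc_{n-k,m-1}$ moves away from $\tau$); I would handle this either by a direct convexity/monotonicity argument on $\log\abs{\fsc_{n,m}}$ in $n$, or by a crude but uniform generating-function bound $\abs{\fsc_{n,m}}/n! = [z^n]\bigpar{\cdot}\le r^{-n}\bigpar{(1+r)/(1-r)}^{m/2}$ valid for every $r\in(0,1)$, optimized appropriately, which suffices to produce an exponential decay in $k$ with a rate bounded away from $1$ uniformly.
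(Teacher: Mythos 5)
Your proposal is correct in outline but takes a genuinely different route from the paper. You work directly with the exponential generating function $\bigl(\tfrac{1+z}{1-z}\bigr)^{u/2}$, express factorial moments of $\Nknm$ as combinatorial prefactors times ratios $|\fsc_{n-kr,m-r}|/|\fsc_{n,m}|$, and evaluate these by saddle-point analysis. The paper instead observes that the cycle lengths $(L_1,\dots,L_m)$, in uniformly random order, form exactly a balls-in-boxes allocation with weights $w_\ell=\frac1\ell\indic{\ell\text{ is odd}}$, and then quotes the machinery of \cite{SJ264}: Theorem 11.4 there gives \eqref{t1} directly, and the conditioned-i.i.d.\ representation together with the local limit estimates of \cite{SJ264} (Lemma 14.1 and Remark 14.2) gives the uniform bound $\P(L_1=k)\le CA^{-k}$ for any $A<\tau^{-1}$, from which the moment convergence \eqref{surr} (truncation plus uniform integrability via H\"older) and the tail bound for $\cE$ (geometric summation plus a union bound over the $m\le n$ cycles) both follow. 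The two approaches are mathematically twins --- the conditioned-i.i.d.\ representation is the probabilistic form of your saddle point, and $\tau$ arises from $\Psi(\tau)=1/\ga$ in both --- but the paper's packaging buys the uniform estimates off the shelf, whereas in your version you must supply from scratch (a) a local limit theorem for $[z^n]\Phi(z)^m$ uniform enough to cover the shifted parameters $(n-kr,m-r)$, and (b) the uniform-in-$k$ ratio bound that you correctly flag as the real work in the last part; your fallback of bounding $[z^{n-k}]\Phi(z)^{m-1}\le \tau^{-(n-k)}\Phi(\tau)^{m-1}$ against the saddle-point lower bound on $[z^n]\Phi(z)^m$ does work, since the resulting spurious polynomial factor is harmless against $\tau^{c\log^2 n}$. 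One bookkeeping slip: your stated asymptotic $|\fsc_{n-kr,m-r}|/|\fsc_{n,m}|\sim\tau^{kr}m^{-r}n^{-kr}k^r$ should read $\sim m^{r}\tau^{kr}n^{-kr}\Phi(\tau)^{-r}$ (the power of $m$ is $+r$, the $k^r$ belongs to the prefactor $(n)_{kr}/k^r$, and $\Phi(\tau)^{-r}$ is missing); the final limit $\E\bigsqpar{(\Nknm/m)_r}\to\pi_k^r$ that you state is nevertheless the correct one.
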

\begin{proof}
  Let $L_1,\dots,L_m$ be the lengths of the cycles of $\gs_{n,m}$, taken in
  (uniformly) random order.
Note that
\begin{align}
  \label{suml}
\sumim L_i=n.
\end{align}

We say that a permutation with $m$ cycles has labeled cycles if we have
labeled its cycles by $1,\dots,m$.

For any positive integers $\ell_1,\dots,\ell_m$ with $\sum_i\ell_i=n$, the
number of permutations of length $n$ with $m$ labelled cycles with lengths
$\ell_1,\dots,\ell_m$, respectively, is
\begin{align}
  \binom{n}{\ell_1,\dots,\ell_m}\prod_{i=1}^m (\ell_i-1)!
=
n!\prod_{i=1}^m \frac{1}{\ell_i}.
\end{align}
Consequently,
\begin{align}
  \P\bigpar{(L_1,\dots,L_m)=(\ell_1,\dots,\ell_m)}
= \frac{1}{Z}\prod_{i=1}^m \frac{1}{\ell_i}\indic{\ell_i \text{ is odd}},
\end{align}
where $Z=Z\xpar{n,m}$ is a normalization factor.

This means that $(L_1,\dots,L_m)$ is an instance of a well-known random
allocation model, called \emph{balls-in-boxes} in \cite[Section 11]{SJ264},
with weights
\begin{align}\label{w}
  w_\ell :=\frac{1}{\ell}\indic{\ell \text{ is odd}},
\qquad \ell\ge0.
\end{align}
(Warning: $n$ and $m$ have opposite meanings in \cite{SJ264}.)

In the notation of 
%\cite[in particular Section 3]{SJ264}, 
\cite[(3.4)]{SJ264}, 
we have the
generating function 
\begin{align}\label{Phi}
  \Phi(z)&:=\sumlo w_\ell z^\ell 
=\sumio \frac{z^{2i+1}}{2i+1}
\notag\\&\phantom:
= \frac12\bigpar{\log(1+z) - \log(1-z)}
= \frac12\log\frac{1+z}{1-z}
\end{align}
and, see \cite[(3.1), (3.5), (3.6), (3.10)]{SJ264},
\begin{align}
\label{go}
  \go&=\infty,\\
\label{rho}
\rho&=1,\\
\label{Psi}
\Psi(t)&=\frac{t\Phi'(t)}{\Phi(t)}
=\frac{2t}{(1-t^2)\bigpar{\log(1+t)-\log(1-t)}},\\
\label{nu}
\nu&:=\lim_{t\upto\rho}\Psi(t)=\infty
.\end{align}

We apply \cite[Theorem 11.4]{SJ264} with $\gl=1/\ga>1$; 
this theorem is stated assuming
$w_0>0$, but we may apply the theorem to the random allocation
$(L_1-1,\dots,L_m-1)$, noting that \eqref{suml} is equivalent to
\begin{align}
  \label{sumli}
\sumim (L_i-1)=n-m,
\end{align}
and it is easily seen that the conclusions of \cite[Theorem 11.4]{SJ264}
hold for any $\gl\ge1$. 
This yields \eqref{t1}; note that \eqref{tau} is equivalent to
$\Psi(\tau)=1/\ga=\gl$. 

To show \eqref{surr}, we first show that the expectations are bounded:
for every $r\ge0$,
\begin{align}\label{suc}
\E  \frac{1}{m}\sum_k k^r \Nknm 
=
\sum_k k^r \E \frac{\Nknm}{m}
\le C_r.
\end{align}
In fact, we will show the stronger estimate that if $1<A<\tau\qw$, then
\begin{align}\label{a1}
%  \sum_k A^k \E \frac{\Nknm}{m}\le C.
  \E \frac{\Nknm}{m}\le C A^{-k}, \qquad k\ge1.
\end{align}
(Recall that $\tau\qw>1$.)

To see \eqref{a1}, we use arguments similar to those in \cite[Section
14]{SJ264}. Note that, by symmetry,
\begin{align}\label{a0}
\E \frac{\Nknm}{m} = \P(L_1=k).
\end{align}
 Let $\tau_{n,m}$ be given by $\Psi(\tau_{n,m})= n/m$. Since $n/m\to\gl$, it
 follows that $\tau_{n,m}\to\tau<A\qw$.
Thus, $\tau_{n,m}<A\qw$ 
when $n$ and $m$ are large enough; consider only such $n$
(and $m$).

Given such $n$ and $m$, let $\xinm_i$, $i\ge1$, be \iid{} random variables
with the distribution
\begin{align}\label{a2}
  \P\bigpar{\xinm_i=k} 
= \frac{1}{\Phi(\tau_{n,m})}\frac{\tau_{n,m}^k}k\indic{k \text{ is odd}},
\qquad k\ge1,
\end{align}
and
let $\Snm_j:=\sum_{i=1}^j\xinm_i$.
The random vector $(L_1,\dots,L_m)$ has the same distribution as 
$(\xinm_1,\dots,\xinm_m)$ conditioned on $\Snm_m=n$,
and it follows that
\begin{align}\label{a3}
  \P(L_1=k)
= 
\frac{ \P\bigpar{\xinm_i=k} \P\bigpar{\Snm_{m-1}=n-k}}{\P\bigpar{\Snm_m=n}}.
\end{align}
It follows from \cite[Lemma 14.1]{SJ264} 
(applied to $\xinm_i-1$)
that
${\P\bigpar{\Snm_m=n}} \sim c m\qqw$.
Similarly,
it follows from \cite[Remark 14.2]{SJ264} that
$ \P\bigpar{\Snm_{m-1}=n-k}\le C m\qqw$. 
Hence \eqref{a3} yields, using
\eqref{a2}, 
\begin{align}\label{a4}
  \P(L_1=k)
\le C \P\bigpar{\xinm_i=k} 
\le C \tau_{n,m}^k 
\le C A^{-k}.
\end{align}
Consequently, \eqref{a1} follows by \eqref{a0} and \eqref{a4},
and then \eqref{suc} follows.

Next,
we have for any $K<\infty$,
\begin{align}\label{a5}
  \E\,& \Bigabs{  \frac{1}{m}\sum_k k^r \Nknm - \sum_k k^r \pi_k}
\notag\\&
\le
\sum_{k=1}^Kk^r  \E \Bigabs{\frac{\Nknm}m -  \pi_k}
+     \sum_{k>K} k^r\E\frac{\Nknm}m + \sum_{k>K} k^r \pi_k.
\end{align}
Given any $\eps>0$, we can by \eqref{a1} and \eqref{pi} 
choose $K$ so large that the two last sums each
are less than $\eps$ for all large $n$; furthermore, each term in the first
sum is $o(1)$ by \eqref{t1} and bounded convergence.
It follows that the \lhs{} of \eqref{a5} is $o(1)$, and thus \eqref{surr}
holds.

Finally, for any $p\ge1$, by \Holder's inequality,  $\sum_k \Nknm/m=1$ and
\eqref{suc}, 
\begin{align}
  \E \Bigpar{\frac{1}{m} \sum_k k^r\E \Nknm}^p
\le
  \E  \sum_k k^{pr} \frac{\Nknm}m
\le C_{pr}.
\end{align}
Hence, all moments are bounded in \eqref{surr}, which implies uniform
integrability of any fixed power and thus convergence of
all moments, see \eg{} \cite[Theorems 5.4.2 and 5.5.4]{Gut}.

To prove \eqref{eq_PE}, notice that for $C_\alpha$ large enough, we have by \eqref{a4},
\begin{align}
  \P(L_1\geq C_\alpha \log^2 n)
\le \frac{1}{n}n^{-\log n},
\end{align}
and we conclude by a union bound, since the number of cycles is  $m\le n$.
\end{proof}

Let $M:=M_{n,m}$ be the number of (unordered) pairs of elements of $[n]$
that belong to the same cycle in $\gs_{n,m}$, and thus are merged together
when cycles are merged. Then
\begin{align}\label{M}
  M_{n,m}=\sum_k \binom k2 \Nknm.
\end{align}

Using the crude bound $\binom k 2 \geq k-1$
and~\eqref{eq_constraint_cycles}, 
we obtain the following deterministic bound: 
\begin{equation}\label{eq_deterministic_bound_pairs}
M_{n,m}\geq n-m.
\end{equation}

The following lemma gives a better estimation of $M_{n,m}$. 
\begin{lemma}\label{LQC1}
With assumptions and notations as in Proposition~\ref{T1},
\begin{align}\label{c1}
\frac{M_{n,m}}{m}
&\pto  
\sum_k \binom{k}2\pi_k 
%\notag\\&
=\frac{2\tau^3}{(1-\tau^2)^2\bigpar{\log(1+\tau)-\log(1-\tau)}}
\notag\\&
=\frac{\tau^2}{\ga(1-\tau^2)}
.\end{align}
Equivalently,
\begin{align}\label{c1b}
\frac{M_{n,m}}{n}
&\pto  
\gam:=\frac{\tau^2}{1-\tau^2}
.\end{align}
\end{lemma}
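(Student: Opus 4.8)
The plan is to derive \eqref{c1} directly from the moment convergence already established in \refP{T1}. Recall from \eqref{M} that $M_{n,m}=\sum_k\binom k2\Nknm=\frac12\sum_k k^2\Nknm-\frac12\sum_k k\Nknm$. The second sum is exactly $n$ by the constraint \eqref{eq_constraint_cycles}, so $M_{n,m}/m=\frac12\cdot\frac1m\sum_k k^2\Nknm-\frac{n}{2m}$. Applying \eqref{surr} with $r=2$, we get $\frac1m\sum_k k^2\Nknm\pto\sum_k k^2\pi_k$, while $n/m\to 1/\ga=\nu$ (here I use $\Psi(\tau)=1/\ga$). Hence $M_{n,m}/m\pto\frac12\sum_k k^2\pi_k-\frac1{2\ga}=\sum_k\binom k2\pi_k$, which gives the first equality in \eqref{c1}. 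Note $n/m\to 1/\ga$ is not $\nu=\infty$; rather $\ga=\lim m/n$, so $n/m\to 1/\ga$, which is finite --- I should be careful to write $1/\ga$ and not invoke $\nu$.

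Next I would carry out the explicit summation. Using \eqref{pi}, $\sum_k\binom k2\pi_k=\frac1{\Phi(\tau)}\sum_{k\text{ odd}}\frac{k-1}{2}\tau^k$. Writing $\sum_{k\text{ odd}}k\tau^k$ and $\sum_{k\text{ odd}}\tau^k$ in closed form: $\sum_{k\text{ odd}}\tau^k=\frac{\tau}{1-\tau^2}$ and $\sum_{k\text{ odd}}k\tau^k=\tau\frac{d}{d\tau}\frac{\tau}{1-\tau^2}=\frac{\tau(1+\tau^2)}{(1-\tau^2)^2}$. Subtracting and dividing by $2$ gives $\sum_{k\text{ odd}}\frac{k-1}{2}\tau^k=\frac12\Bigpar{\frac{\tau(1+\tau^2)}{(1-\tau^2)^2}-\frac{\tau}{1-\tau^2}}=\frac{\tau^3}{(1-\tau^2)^2}$. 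Dividing by $\Phi(\tau)=\frac12\bigpar{\log(1+\tau)-\log(1-\tau)}$ yields $\frac{2\tau^3}{(1-\tau^2)^2\bigpar{\log(1+\tau)-\log(1-\tau)}}$, the second expression in \eqref{c1}. For the third expression, I use that \eqref{tau} says $\frac{(1-\tau^2)\bigpar{\log(1+\tau)-\log(1-\tau)}}{2\tau}=\ga$, i.e.\ $\log(1+\tau)-\log(1-\tau)=\frac{2\ga\tau}{1-\tau^2}$; substituting this into the denominator turns $\frac{2\tau^3}{(1-\tau^2)^2\bigpar{\log(1+\tau)-\log(1-\tau)}}$ into $\frac{2\tau^3}{(1-\tau^2)^2}\cdot\frac{1-\tau^2}{2\ga\tau}=\frac{\tau^2}{\ga(1-\tau^2)}$, as claimed.

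Finally, \eqref{c1b} follows immediately from \eqref{c1}: since $m/n\to\ga$, we have $\frac{M_{n,m}}{n}=\frac{m}{n}\cdot\frac{M_{n,m}}{m}\pto\ga\cdot\frac{\tau^2}{\ga(1-\tau^2)}=\frac{\tau^2}{1-\tau^2}=\gam$.

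I do not expect any genuine obstacle here: the lemma is a routine corollary of the second-moment part of \refP{T1} together with the deterministic identity \eqref{eq_constraint_cycles}, plus an elementary geometric-series computation and a single substitution of the defining equation \eqref{tau} for $\tau$. The only points requiring a little care are (i) correctly tracking $n/m\to1/\ga$ (not confusing it with $\nu$, which is infinite), and (ii) justifying that the convergence of $\frac1m\sum_k k^2\Nknm$ is in probability --- but this is exactly \eqref{surr} with $r=2$, so nothing new is needed. The closed-form evaluation of $\sum_{k\text{ odd}}\frac{k-1}2\tau^k$ and the algebraic simplification via \eqref{tau} are the kind of one-line manipulations I would present without belaboring.
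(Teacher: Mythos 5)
Your proposal is correct and follows essentially the same route as the paper: convergence in probability comes from \eqref{M} together with \eqref{surr} (the paper applies \eqref{surr} for both $r=2$ and $r=1$, whereas you replace the $r=1$ part by the deterministic identity $\sum_k k\Nknm=n$ and $n/m\to1/\ga$, which is equivalent since $\sum_k k\pi_k=\Psi(\tau)=1/\ga$), and the evaluation of $\sum_k\binom k2\pi_k$ is the same elementary computation, done by direct geometric series where the paper uses $\tfrac12\gf_X''(1)=\tau^2\Phi''(\tau)/(2\Phi(\tau))$. All your algebra checks out, including the substitution of \eqref{tau} and the passage to \eqref{c1b}.
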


\begin{proof}
  The convergence in probability and with all moments follows by \eqref{M}
  and \eqref{surr}.
To compute the sum, note that if $\xi$ is a random variable with distribution
$(\pi_k)_k$, then its \pgf{} is
\begin{align}
  \gf_X(z):=\sum_k \pi_k z^k
=\frac{\Phi(\tau z)}{\Phi(\tau)}.
\end{align}
Hence,
\begin{align}
  \sum_k \binom{k}2\pi_k 
&= \E\frac{X(X-1)}2
= \tfrac12\gf_X''(1)
=\frac{\tau^2\Phi''(\tau)}{2\Phi(\tau)}
\notag\\&
=\frac{2\tau^3}{(1-\tau^2)^2\bigpar{\log(1+\tau)-\log(1-\tau)}}
,\end{align}
and the result follows using \eqref{tau}.
\end{proof}

The results above hold in full generality, but here we go back to our
setting. We recall that $\frac{g_n}{n}\to \theta$ and we set
$\alpha=1-2\theta$ (and therefore the constant $\gamma$ now depends on
$\theta$); thus $m/n=(n+1-2g_n)/n\to\ga$.

\begin{lemma}\label{lem_proba_merging}
Let $\cE_{n,g_n}^{(r)}$ be the event where, in $\sig$, $2i-1$ and $2i$
belong to the same cycle for all $1\leq i\leq r$, and that all these cycles
are distinct, and let $\P_{n,g_n}^{(r)}=\P(\cE_{n,g_n}^{(r)})$. Then, for
any fixed $r$,
\begin{equation}\label{er}
\P_{n,g_n}^{(r)}=(1-o(1))\left(\frac{2\gamma}{n}\right)^r.
\end{equation}
\end{lemma}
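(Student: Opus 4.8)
The statement asks for the asymptotic probability that, in the random \cperm{} $\sig$ on $n+1$ elements with $m=n+1-2g_n$ cycles, the $r$ prescribed pairs $\{2i-1,2i\}$ ($1\le i\le r$) all lie in the same cycle, \emph{and} these $r$ cycles are pairwise distinct. I would break this into two contributions. First, the probability that $\{2i-1,2i\}$ lie in the same cycle for all $i$, \emph{without} the distinctness requirement; then I would argue that the event where two of these pairs share a cycle is negligible compared to $(2\gamma/n)^r$, so it can be subtracted off without affecting the leading term.

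\textbf{Main computation.} For the ``same cycle'' probability, I would proceed by a direct counting/recursive argument over the pairs, or equivalently by repeatedly ``revealing'' which cycle each element lies in. Condition on everything; the key input is: given the cycle structure, for two fixed distinct elements $a,b$ of $[n+1]$, the (conditional) probability that they lie in the same cycle, averaged over the uniform \cperm{}, is essentially $M_{n,m}/\binom{n+1}{2}$ where $M_{n,m}$ is the number of pairs sharing a cycle as in \eqref{M}. By \refL{LQC1}, $M_{n,m}/n\pto\gam$, and by the moment bounds already established in \refP{T1} (the uniform integrability giving $\E M_{n,m}/n\to\gam$), $\E M_{n,m} = (1+o(1))\gam n$. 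So a single pair is in the same cycle with probability $(1+o(1))\gam n/\binom{n+1}{2}=(1+o(1))2\gam/n$. For $r$ pairs I would iterate: after fixing the cycles of the first $i-1$ pairs (each of bounded length on the event $\cE$ of \eqref{eq_PE}, which has probability $\ge 1-n^{-\log n}$), the remaining permutation is still approximately a uniform \cperm{} on $n+1-O(1)$ elements with $m-O(1)$ cycles, so the same estimate applies with an error that does not disturb the leading constant; multiplying gives $(1+o(1))(2\gam/n)^r$. I would make the conditioning rigorous either via the balls-in-boxes representation used in \refP{T1} (treating $L_1,\dots$ as the cycle lengths and computing $\P(2i-1\sim 2i)$ directly from the joint law) or via an explicit enumeration of \cperm{s} with $r$ marked elements placed pairwise together.

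\textbf{The distinctness correction.} I would then show $\P(\text{some two of the }r\text{ pairs share a cycle} \mid \text{all pairs internally connected}) = o(1)$, so that restricting to distinct cycles only changes the probability by a factor $1-o(1)$. The point is that forcing, say, $\{2i-1,2i\}$ and $\{2j-1,2j\}$ into one common cycle is an additional constraint linking four elements, which costs an extra factor $O(1/n)$ (again using the bounded-cycle-length event $\cE$ so that the relevant cycle is short and the number of ways to route it is $O(1)$ per unit length). Combined with the $O(n^{-\log n})$ bound on $\cE^c$, this makes the non-distinct contribution $O(n^{-r-1})$, negligible against $(2\gam/n)^r$.

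\textbf{Main obstacle.} The delicate point is controlling the conditional law after removing the cycles containing the first few pairs, i.e.\ justifying that ``a uniform \cperm{} on $n+1$ elements with $m$ cycles, conditioned on a bounded number of elements lying in specified short cycles, is approximately a uniform \cperm{} on the remaining elements.'' I expect to handle this by working throughout on the high-probability event $\cE$ of \eqref{eq_PE} and using the balls-in-boxes / i.i.d.-conditioned-on-sum representation from \refP{T1}, so that removing $O(1)$ boxes with bounded contents perturbs $n$ and $m$ by $O(1)$ and leaves the local limit estimates (the $\sim c\,m\qqw$ asymptotics of \cite[Lemma 14.1]{SJ264}) intact, giving ratios $1+o(1)$ at each of the $r$ steps.
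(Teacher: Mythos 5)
Your proposal is correct in substance, but at the multi-pair step it takes a genuinely different route from the paper. For a single pair you use exactly the paper's mechanism: by exchangeability, the probability that two fixed elements share a cycle is $\E M_{n+1,n+1-2g_n}/\binom{n+1}{2}$, and $\E M/n\to\gam$ follows from the moment convergence in \refL{LQC1}. For $r$ pairs, however, the paper does not iterate or subtract: working on the event $\cE$ of \eqref{eq_PE}, it counts, for a fixed realization of $\sig$, the number of $r$-tuples of disjoint pairs that are each internally connected \emph{with all $r$ cycles distinct}, sandwiching this count between $\prod_{i=0}^{r-1}\bigl(M-i\binom{\Delta}{2}\bigr)$ and $M^r$ (the sandwich is tight because of the deterministic lower bound $M\ge n-m$ from \eqref{eq_deterministic_bound_pairs} together with $\Delta\le C_\alpha\log^2 n$), and then divides by the total number $\binom{n+1}{2,2,\dots,2,n+1-2r}$ of $r$-tuples of disjoint pairs. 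This one-shot symmetry argument builds the distinctness requirement directly into the count and entirely avoids the conditional-law analysis you correctly flag as your main obstacle: there is no need to argue that the permutation left after deleting revealed cycles is again a uniform \cperm{}, nor to track the drift of $n$ and $m$ over $r$ rounds. Your sequential approach can be made rigorous — given the cycle containing a fixed element, the rest of $\sig$ is in fact \emph{exactly} a uniform \cperm{} on the remaining elements with one fewer cycle, so the balls-in-boxes estimates do transfer — and your separate $O(1/n)$ bound for the event that two pairs share a cycle is sound (the paper bounds such degeneracies elsewhere via \refL{lem_points_in_cycles}); but the paper's counting argument gets the same conclusion with considerably less bookkeeping.
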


\begin{proof}
By \eqref{eq_PE}, we have 
\begin{equation}\label{eq_cE}
\P_{n,g_n}^{(r)}=\P(\cE_{n,g_n}^{(r)}\mid\cE)+o(n^{-r}).
\end{equation}
In what follows, we will reason conditionally on $\cE$. 
We let $\Delta$ be the size of the biggest cycle in $\sig$.
Thus, by the definition \eqref{eq_PE}, $\cE$ is the event
\begin{equation}\label{eq_Delta}
\Delta \leq C_\alpha \log^2 n.
\end{equation}

The number of $r$-tuples of (unordered, non overlapping) pairs of elements of $[n+1]$
that belong to the same cycle in $\sig$ is at most
$(M_{n+1,n+1-2g_n})^r$ (because this quantity counts $r$-tuples with possible overlaps). On the other hand, we can lower bound the number of $r$-tuples of pairs of elements of $[n+1]$ belonging to $r$ distinct cycles by
\begin{equation}
\prod_{i=0}^{r-1} \left(M_{n+1,n+1-2g_n}-i\binom \Delta 2\right).
\end{equation}

By \eqref{eq_deterministic_bound_pairs} and \eqref{eq_Delta}, we have 
\begin{equation}
\prod_{i=0}^{r-1} \left(M_{n+1,n+1-2g_n}-i\binom \Delta 2\right)=(1-o(1))(M_{n+1,n+1-2g_n})^r.
\end{equation}

The total number of $r$-tuples of (non-overlapping) pairs of elements of
$[n]$ is $\binom n {2,2,\ldots,2,n-2r}$. 
Therefore,
\begin{equation}
\P(\cE_{n,g_n}^{(r)}\mid\cE,M_{n+1,n+1-2g_n})=(1-o(1))\frac{(M_{n+1,n+1-2g_n})^r}{\binom {n+1} {2,2,\ldots,2,n+1-2r}}.
\end{equation}

Finally, using \eqref{eq_cE} and the convergence in probability established in~\eqref{c1b}, we can conclude that
\begin{equation}
\P_{n,g_n}^{(r)}=(1-o(1))\left(\frac{2\gamma}{n}\right)^r.
\end{equation}
\end{proof}

\begin{lemma}\label{lem_proba_merging_hard}
Let $\P_{n,g_n}^{(r,k)}$ be the probability that, in $\sig$, 
there exist $r+k$ distinct cycles $C_1,C_2,\ldots,C_r,\tilde{C_1},\tilde{C_2},\ldots,\tilde{C_k}$ such that
 $2i-1$ and $2i$ belong $C_i$ for all $1\leq i\leq r$, and $2r+i$ belongs to
 $\tilde{C_i}$ for all $1\leq i\leq k $ . Then, for every fixed $r$ and $k$,
\begin{equation}
\P_{n,g_n}^{(r,k)}=(1-o(1))\P_{n,g_n}^{(r)}.
\end{equation}
\end{lemma}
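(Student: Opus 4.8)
The plan is to compare $\P_{n,g_n}^{(r,k)}$ with $\P_{n,g_n}^{(r)}$ by essentially the same conditioning-on-$\cE$ and double-counting argument used in the proof of \refL{lem_proba_merging}. First I would condition on the event $\cE$ that the largest cycle of $\sig$ has length at most $C_\alpha\log^2 n$; by \eqref{eq_PE} this costs only an additive $o(n^{-r})$, which is negligible relative to $\P_{n,g_n}^{(r)}\asymp (2\gam/n)^r$, so it suffices to estimate $\P(\cE_{n,g_n}^{(r,k)}\mid\cE)$, where $\cE_{n,g_n}^{(r,k)}$ is the event in the statement. Conditionally on $\cE$ we again set $\Delta\le C_\alpha\log^2 n$ for the largest cycle size.

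The key point is that requiring the extra $k$ elements $2r+1,\dots,2r+k$ to lie in $k$ further distinct cycles (distinct also from the $r$ cycles already used) is, up to a $1+o(1)$ factor, a ``free'' constraint: almost every element of $[n+1]$ lies in its own otherwise-unused cycle. Concretely, I would fix a configuration realizing $\cE_{n,g_n}^{(r)}$ (i.e.\ the $r$ pairs $\{2i-1,2i\}$ merged in $r$ distinct cycles $C_1,\dots,C_r$) and count, among the permutations compatible with this, the proportion for which additionally $2r+i$ lies in a cycle $\tilde C_i$ distinct from $C_1,\dots,C_r$ and from $\tilde C_1,\dots,\tilde C_{i-1}$. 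Each of the $k$ forbidden ``bad'' events (some $2r+i$ lands in one of the at most $r+k$ already-named cycles, which have total size at most $(r+k)\Delta\le (r+k)C_\alpha\log^2 n$) has conditional probability $O(\Delta/n)=O(\log^2 n/n)=o(1)$, and there are only $k$ of them; by a union bound the probability of avoiding all of them is $1-o(1)$. Hence $\P(\cE_{n,g_n}^{(r,k)}\mid\cE)=(1-o(1))\P(\cE_{n,g_n}^{(r)}\mid\cE)$, and combining with \eqref{eq_cE} (and the analogous relation for the pair $(r,k)$) gives
\begin{equation*}
\P_{n,g_n}^{(r,k)}=(1-o(1))\P_{n,g_n}^{(r)}.
\end{equation*}

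To make the union bound rigorous I would work conditionally on $\cE$ and on the cycle-length profile (equivalently on $M_{n+1,n+1-2g_n}$ and $\Delta$), as in \refL{lem_proba_merging}: given the multiset of cycle lengths, a uniform \cperm{} is obtained by a uniform placement of the labels $1,\dots,n+1$ into the cycle slots, so the conditional probability that a fixed new label $2r+i$ falls into one of a prescribed set of at most $(r+k)\Delta$ slots is at most $(r+k)\Delta/(n+1-(\text{already placed}))=O(\log^2 n/n)$, uniformly over the relevant profiles (those in $\cE$). The main obstacle, such as it is, is purely bookkeeping: one must check that conditioning on $\cE_{n,g_n}^{(r)}$ together with $\cE$ does not distort the placement of the remaining labels by more than $1+o(1)$ — this is handled exactly as in the previous lemma, since the constraints involve only $O(\log^2 n)$ of the $n+1$ labels and $M_{n+1,n+1-2g_n}=(1+o(1))\gam n\to\infty$ so the ``non-overlapping cycles'' correction is again $1-o(1)$ by \eqref{eq_deterministic_bound_pairs} and \eqref{c1b}. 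No genuinely new idea beyond \refL{lem_proba_merging} is needed.
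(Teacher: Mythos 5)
Your proposal is correct and follows essentially the same route as the paper: condition on the event $\cE$ that all cycles have length at most $C_\alpha\log^2 n$ (negligible additive error by \eqref{eq_PE}), then condition on $\cE_{n,g_n}^{(r)}$ and use that the labels $2r+1,\dots,2r+k$ are placed uniformly in the remaining spots, so that by a union bound the probability that any of them collides with one of the $O(1)$ already-named cycles (each of size at most $\Delta=O(\log^2 n)$) is $o(1)$. The paper phrases the union bound over the $rk+\binom{k}{2}$ pairwise collision events rather than your sequential formulation, but this is the same estimate.
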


\begin{proof}

Let $\Delta$ be the maximal size of a cycle in $\sig$. By the same reasoning
as in the proof of Lemma~\ref{lem_proba_merging}, we can work conditionally
on $\gD$, and assume
\begin{equation}\label{eq_Delta2}
\Delta \leq C_\alpha \log^2 n.
\end{equation}

Condition also 
on the fact that there exists $r$ distinct cycles $C_1,C_2,\ldots,C_r$ of
$\sig$ such that $2i-1$ and $2i$ belong to $C_i$ for all $1\leq i\leq r$
(which amounts to conditioning on $\cE_{n,g_n}^{(r)}$). We want to show that
the rest of the constraint holds with high probability. Note that the
numbers $2r+1\leq i\leq 2r+k$ are distributed uniformly in the cycles of
$\sig$ among all remaining spots. Therefore, given $2r+1\leq j\leq 2r+k$,
the probability that $j$ belongs to a given $C_i$ is less than
$\frac{\Delta}{n+1-2r-k}$. By the same reasoning, given $2r+1\leq i<j\leq
2r+k$, the probability that $i$ and $j$ belong to the same cycle is also
bounded by 
$\frac{\Delta}{n+1-2r-k}$. 

By a union bound, the conditional probability of failure is bounded by
\begin{equation}
\left(rk+\binom k2\right)\frac{\Delta}{n+1-2r-k},
\end{equation}
which is $o(1)$ by \eqref{eq_Delta2}.
\end{proof}
\begin{lemma}\label{lem_points_in_cycles}
For fixed $k$ and $t<k$, the probability that $1,2,\ldots,k$ belong to
exactly $t$ distinct cycles of $\sig$ is
\begin{align}
  \label{sof1}
O\left(\frac{1}{n^{k-t-1/4}}\right),
\end{align}
\end{lemma}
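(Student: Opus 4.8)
The plan is to estimate the probability directly by first conditioning on the event $\cE$ of Proposition~\ref{T1}, so that the maximal cycle length $\gD$ satisfies $\gD\le C_\alpha\log^2 n$, which costs only an additive error $n^{-\log n}$ and is negligible compared to $n^{-(k-t-1/4)}$. Working conditionally on $\cE$ (and on the cycle-length profile of $\sig$), I would fix an arbitrary partition of $\set{1,2,\dots,k}$ into $t$ nonempty blocks and bound the probability that this particular partition is exactly the one induced by the cycle structure of $\sig$; there are only $O(1)$ such partitions, so it suffices to bound each one by $O(n^{-(k-t-1/4)})$ and sum.

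The key combinatorial point is this: the $k$ elements $1,\dots,k$ are distributed uniformly among the ``slots'' of $\sig$ (i.e.\ among the positions in its cycles), and for a block of size $s\ge2$ to be realised, after placing the first element of the block somewhere, each of the remaining $s-1$ elements of that block must land in one of the at most $\gD-1$ remaining slots of that same cycle, out of roughly $n$ total remaining slots; this contributes a factor $O(\gD/n)$ per element beyond the first in each block. Summing over blocks, the number of ``beyond-the-first'' elements is exactly $k-t$, so heuristically the probability is $O\bigpar{(\gD/n)^{k-t}}=O\bigpar{(\log^2 n)^{k-t}/n^{k-t}}$. Since any power of $\log n$ is $o(n^{1/4})$, this is $O(n^{-(k-t)}\log^{2(k-t)}n)=o(n^{-(k-t)+1/4})=O(n^{-(k-t-1/4)})$, which is the claimed bound. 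I would make the ``uniform among slots'' statement precise either by the balls-in-boxes representation from the proof of Proposition~\ref{T1} (the vector $(L_1,\dots,L_m)$ together with a uniform assignment of labels to slots), or more simply by noting that a uniform \cperm{} in $\fsc_{n+1,n+1-2g_n}$ can be sampled by first choosing its cycle-length profile and then distributing the labels $1,\dots,n+1$ uniformly at random into the cycles; conditionally on the profile, the events ``$i$ and $j$ lie in the same cycle'' behave like sampling without replacement and obey the per-pair bound $\P(i\sim j\mid\text{profile})\le \gD/(n+1-k)$ already used in Lemma~\ref{lem_proba_merging_hard}.

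Concretely, I would carry out the following steps. First, reduce to the event $\cE$ via $\P(\text{event})\le\P(\text{event}\mid\cE)+\P(\cE^c)$ and absorb $\P(\cE^c)\le n^{-\log n}$ into the error term. Second, fix a partition $\set{B_1,\dots,B_t}$ of $[k]$ with $|B_j|=s_j\ge1$ and $\sum_j s_j=k$; enumerating the partitions gives only a constant (depending on $k$) number of cases. Third, conditionally on the length profile of $\sig$ (hence on $\gD\le C_\alpha\log^2 n$), bound the probability that the induced partition of $[k]$ refines to exactly $\set{B_1,\dots,B_t}$: reveal the cycle-memberships of $1,2,\dots,k$ one at a time in increasing order, and each time an element is forced to join the cycle already occupied by a lower-indexed element of its block, pay a factor at most $\gD/(n+1-k)$; the total number of such forced coincidences over all blocks is $\sum_j(s_j-1)=k-t$, giving a conditional bound $\bigpar{\gD/(n+1-k)}^{k-t}\cdot\binom{k}{2}^{\text{\,combinatorial count}} = O\bigpar{(\log^2 n/n)^{k-t}}$. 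Fourth, take expectations over the profile on $\cE$, sum over the $O(1)$ partitions, and use $\log^{2(k-t)}n=o(n^{1/4})$ to conclude $O(n^{-(k-t-1/4)})$. The main obstacle I anticipate is a clean justification of the ``uniform among slots / sampling without replacement'' heuristic together with the correct bookkeeping that independence is only needed in one direction (an upper bound), so that a straightforward union bound over the revealed coincidences is legitimate despite the conditioning on the profile; this is exactly the style of argument already used in Lemmas~\ref{lem_proba_merging} and~\ref{lem_proba_merging_hard}, so I would model the write-up on those proofs.
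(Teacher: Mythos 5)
Your proposal is correct and follows essentially the same route as the paper: condition on the event that all cycles have length $O(\log^2 n)$, reduce to a constant number of combinatorial cases (you enumerate set partitions of $[k]$ into $t$ blocks, the paper instead chooses $t$ representatives via a $\binom{k}{t}$ union bound), and then pay a factor of order $\Delta/(n+1-k)$ for each of the $k-t$ forced coincidences using the uniform placement of labels into cycle slots. The bookkeeping and the final absorption of the $\log$ powers into $n^{1/4}$ match the paper's argument.
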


\begin{proof}
Let $\Delta$ be the maximal size of a cycle in $\sig$. By the same reasoning
as in the proof of Lemma~\ref{lem_proba_merging}, we can work conditionally
on 
$\gD$ and assume
\begin{equation}\label{eq_Delta3}
\Delta \leq C_\alpha \log^2 n.
\end{equation}

Let us call $\SUCC$ the event that $1,2,\ldots,k$ belong to exactly $t$
distinct cycles of $\sig$.
We will prove a stronger result, namely that,
conditioned on $\gD$,
\begin{align}
  \label{sof2}
\P(\SUCC)=O\left(\left(\frac{\Delta}{n}\right)^{k-t}\right),
\end{align}
and we conclude using \eqref{eq_Delta3}.

Let $\disj$ be the event that $1,2,\ldots,t$ belong to exactly $t$ distinct cycles of $\sig$.

By a union bound and symmetry, we have
\begin{equation}\label{eq_proba_condi}
\P(\SUCC)\leq \binom kt \P(\SUCC\mid \disj)\P(\disj)
\leq \binom kt \P(\SUCC\mid \disj).
\end{equation}
Now, conditioning also on $\disj$, the elements $t+1,\ldots,k$ are placed
uniformly in the remaining spots of $\sig$. The total size of the cycles
containing $1,2,\ldots,t$ is less than $t\Delta$; hence,
\begin{equation}\label{eq_Psucc_condi}
\P(\SUCC\mid \disj)\leq \left(\frac{t\Delta}{n+1-k}\right)^{k-t}.
\end{equation}
%
%Finally, by Lemma~\ref{lem_proba_merging_hard} (for $r=0$ and $k$ replaced by $k-t$), we have
%\begin{equation}\label{eq_Pdisj}
%\P(\disj)=1-o(1).
%\end{equation}
Combining \eqref{eq_proba_condi} and  \eqref{eq_Psucc_condi}  
yields \eqref{sof2} and thus \eqref{sof1}.
\end{proof}

\section{Cycles in \ctree{s}}\label{S:ctree}

If $(T,\sigma)$ is a \ctree, then any simple cycle of length $\ell$ in its
underlying graph can be decomposed into a list $\bP=(p_1,p_2,\ldots,p_k)$ 
of non-intersecting simple paths in $T$ 
such that 
\begin{PXenumerate}{C}
\item\label{C1} $\sum_{i=1}^k |p_i|=\ell$,
\item\label{C2} $\eend(p_i)\sim \start(p_{(i+1 \mod k)})$ for all $i$,
\item \label{C3} 
for every other pair of vertices $v,v'\in (p_1,p_2,\ldots,p_k)$, we
  have $v\not\sim v'$. 
\end{PXenumerate}
This decomposition is unique up to cyclically reordering the $p_i$, or
reversing them all and their order, or a combination of both.

%Hence, we introduce the following  notation.

\subsection{More notation}\label{SSmore}
Let $T$ be a rooted tree.

Let $\Poo(T)$ be the set of all lists $\bP=(p_1,\dots,p_k)$ of pairwise
disjoint paths in $T$, of arbitrary length $k\ge1$.
For a list $\bP=(p_1,\dots,p_k)\in\Poo(T)$, let $s(\bP):=k$, the number of
paths in the list, and $\ell(\bP):=\sum_1^k|p_i|$, their total length.
%(Recall that $|p_i|$ is the number of edges in $p_i$.)
Also, let
$\Ext(\bP):=\bigcup_i\Ext(p_i)=\set{\start(p_i),\eend(p_i):i=1,\dots,k}$, the
set of endpoints of the paths in $\bP$; note that $|\Ext(\bP)|=2s(\bP)$.

For an integer $\ell\ge1$, let
\begin{align}\label{PPl}
\PPl(T):=\set{\bP\in\Poo(T):\ell(\bP)=\ell},   
\end{align}
the set of lists of paths with
total length $\ell$.
Furthermore, 
for any finite sequence of positive integers $\mathbf{m}=(m_1,\ldots,m_k)$,
we write $|\mathbf{m}|=m_1+m_2+\ldots+m_k$ and $s(\mathbf{m})=k$. 
We define 
\begin{align}\label{PM}
\PM(T):=\set{\bP=(p_1,\dots,p_k)\in\Poo(T): |p_i|=m_i \forall i}, 
\end{align}
the set of lists of pairwise disjoint  paths
$(p_1,\ldots,p_k)$ of lengths $m_1,\ldots,\allowbreak m_k$, respectively, 
in $T$.
We let $\pM(T):=|\PM(T)|$ be its cardinal.
Define also
\begin{equation}\label{eq_Pm}
\Pm=\prod_{i=1}^{s(\mathbf{m})} 2m_i.
\end{equation}

For two lists $\bP,\bP'\in\Poo(T)$, 
we write $\bP\simeqx \bP'$ if and only if $\bP'$ can be obtained from 
$\bP$ by cyclically reordering its paths, or
reversing them all and their order, or a combination of both.
Note that $\bP\simeqx \bP'$ entails $s(\bP)=s(\bP')$ and
$\ell(\bP)=\ell(\bP')$, and that each list $\bP$ is in an equivalence class
$[\bP]$ with exactly $2s(\bP)$ elements.
Let $\hPPl(T)$ be a subset of $\PPl(T)$ obtained by selecting exactly one
element from each equivalence class in $\PPl(T)$.

Given also a \cperm{} $\gs$ of the vertex set of $T$, 
so that $(T,\gs)$ is a \ctree,
let $\CCl(T,\gs)$ be the set of lists $\bP=(p_1,\dots,p_k)\in\hPPl(T)$
that satisfy \ref{C1}--\ref{C3} above.
Thus, there is a bijection between $\CCl(T,\gs)$
and the set of  cycles of length $\ell$
in the underlying graph of the \ctree{} $(T,\gs)$.
Hence, $\ccl(T_n,\gs):=|\CCl(T_n,\gs)|$ equals the number of cycles of
length $\ell$ in the underlying graph.

Furthermore, 
let $\tCCl(T,\gs)$ be the set of lists $\bP=(p_1,\dots,p_k)\in\hPPl(T)$
that satisfy \ref{C1}--\ref{C2}.
Thus $\tCCl(T,\gs)\supseteq\CCl(T,\gs)$
Let further 
$\tccl(T_n,\gs):=|\tCCl(T_n,\gs)|$,
and note that $\tccl(T,\gs)\ge\ccl(T,\gs)$.

As in \refS{S:tree},
let $\Npl(T)$ be the number of  paths of length $\ell$ in $T$. 
Similarly, 
if $\bt$ is an unrooted tree, let $N_\bt(T)$ be the number of subtrees of
$T$ that are isomorphic to $\bt$. 

\subsection{Lemmas and proof of \refT{thm_cycles}}
We are really interested in random trees $\T$, but it will be convenient to
first consider non-random trees.
We suppose for the following lemmas
that $(T_n)$ is a given sequence of
(deterministic) rooted trees, with $T_n\in\cT_n$.
We say that the sequence is \emph{good}, if, as \ntoo,
\begin{align}\label{apl}
  \Npl(T_n)/n \to 2\ell, \qquad \forall\ell\ge1,
\end{align}
and, for every finite unrooted tree $\bt$, 
there is a constant $c_\bt\ge0$ such that
\begin{align}\label{ant}
  N_\bt(T_n)/n\to c_\bt.
\end{align}

\begin{lemma}\label{LC1}
  Let $(T_n)$ be a good sequence of trees.
Then, for every $\bm=(m_1,\dots,m_k)$,
\begin{align}\label{lc1}
  \pM(T_n)=|\PM(T_n)|
=\Pm n^k +\on{k}.
\end{align}
\end{lemma}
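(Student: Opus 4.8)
The plan is to count lists $\bP=(p_1,\dots,p_k)$ of pairwise disjoint paths in $T_n$ with prescribed lengths $m_1,\dots,m_k$, and to show that to leading order these paths behave as if chosen ``independently'', each contributing a factor $2m_i n$ (the factor $2m_i$ accounting for the choice of an endpoint and orientation, once the underlying unoriented path is fixed), so that $\pM(T_n)\sim \Pm n^k$ with $\Pm=\prod_i 2m_i$. The key observation is that $\Npl(T_n)=2\ell\, n+o(n)$ by goodness~\eqref{apl}, and that the number of \emph{oriented} paths of length $m_i$ is $\Npl[m_i](T_n)$; so if we ignored the disjointness constraint the count of tuples would be $\prod_i \Npl[m_i](T_n)=\prod_i(2m_i n+o(n))=\Pm n^k+o(n^k)$. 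The content of the lemma is that imposing pairwise disjointness only removes $o(n^k)$ tuples.

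First I would establish the upper bound $\pM(T_n)\le \prod_i \Npl[m_i](T_n)=\Pm n^k+o(n^k)$ trivially, since any valid list is in particular a list of $k$ oriented paths of the given lengths. For the matching lower bound, I would bound the number of \emph{bad} tuples — tuples of oriented paths of lengths $m_1,\dots,m_k$ where at least two share a vertex — and show this is $o(n^k)$. Fix two indices $i<j$; if $p_i$ and $p_j$ intersect, then $p_i\cup p_j$ is a subtree of $T_n$ with at most $m_i+m_j+1$ vertices and strictly fewer than $(m_i+1)+(m_j+1)$ vertices, i.e.\ its number of edges is at most $m_i+m_j-1$. Crucially, it is \emph{connected}. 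I would enumerate such bad pairs by first choosing the isomorphism type $\bt$ of the connected subtree $p_i\cup p_j$ (finitely many choices, since its size is bounded), then choosing an embedded copy of $\bt$ in $T_n$ — there are $N_\bt(T_n)=c_\bt n+o(n)=O(n)$ of these by~\eqref{ant} — then choosing the two paths $p_i,p_j$ inside that fixed finite subtree (a bounded number of ways), and finally choosing the remaining $k-2$ paths $p_\ell$, $\ell\neq i,j$, freely among oriented paths of the appropriate lengths ($\prod_{\ell\neq i,j}\Npl[m_\ell](T_n)=O(n^{k-2})$ ways). This gives $O(n)\cdot O(n^{k-2})=O(n^{k-1})=o(n^k)$ bad tuples for each pair $(i,j)$, and summing over the $\binom k2$ pairs keeps it $o(n^k)$. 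Hence $\pM(T_n)\ge \Pm n^k+o(n^k)-O(n^{k-1})$, and combined with the upper bound this yields~\eqref{lc1}.

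The main obstacle — really the only subtle point — is handling the intersection structure when $k\ge 3$: a bad tuple could have several intersecting pairs at once, and one must make sure the union bound over pairs $(i,j)$ is legitimate rather than overcounting in a harmful direction. But since we only need an \emph{upper} bound on the number of bad tuples, a union bound over the (at least one) intersecting pair is exactly what is wanted: every bad tuple is counted in the estimate for at least one pair $(i,j)$, so the total is at most $\sum_{i<j}(\text{bad tuples for the pair }(i,j))=o(n^k)$. The other point to be slightly careful about is that $N_\bt$ in~\eqref{ant} counts subtrees (unlabelled copies) while we want embedded, possibly oriented configurations; but the discrepancy is only a bounded multiplicative constant depending on $\bt$ and the fixed lengths, which is harmless. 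A final cosmetic remark: the $o(n^k)$ in~\eqref{lc1} could in fact be sharpened to $O(n^{k-1})$, but the stated form suffices for the applications, so I would not pursue that.
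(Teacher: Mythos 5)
Your argument is correct and is essentially the paper's own proof: count $k$-tuples of paths of the prescribed lengths ignoring disjointness, giving $\prod_i P_{m_i}(T_n)=\Pm n^k+o(n^k)$ by \eqref{apl}, and then show via a union bound over pairs that the tuples with two intersecting paths number only $O(n^{k-1})$, using \eqref{ant} applied to the bounded-size connected subtree formed by the union of the two intersecting paths. The only (inconsequential) slip is an off-by-one: that union has at most $m_i+m_j+1$ vertices and hence at most $m_i+m_j$ edges, not $m_i+m_j-1$.
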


\begin{proof}
  An element of $\PM$ is a  sequence of disjoint paths $p_1,\dots,p_k$ in
  $T_n$, with $|p_i|=m_i$.
Thus, if we ignore the condition that the paths be disjoint, 
each $p_i$ may be chosen in $P_{m_i}(T_n)$ ways, which  gives
$\prod_1^k P_{m_i}(T_n)$ sequences.
We have to subtract the number of sequences with two intersecting paths
$p_i$ and $p_j$. It suffices to consider the case when $p_1$ and $p_2$
intersect. Then 
their union is a subtree $\bt$ of $T_n$ with at most $m_1+m_2+1$ vertices.
Regard $\bt$ as an unlabelled tree $\bbt$.  There is only a finite
number of possible such $\bbt$, and for each choice, the condition
\eqref{ant} shows that the number of possible subtrees $\bt$ in $T_n$ 
is $O(n)$.
Moreover, for each $\bt$, the paths $p_1$ and $p_2$ are subsets of $\bt$
and can thus be chosen in $O(1)$ ways. The remaining paths $p_3,\dots,p_k$
can, as above, each be chosen in $O(n)$ ways.
Consequently, the number of sequences of paths $(p_1,\dots,p_k)$ such that
$|p_i|=m_i$ and two of the paths intersect is $\On{k-1}$.
It follows that, using \eqref{apl} and \eqref{eq_Pm},
\begin{align}\label{apa}
%  \pM
|\PM(T_n)|&
=\prodik P_{m_i}(T_n) - \On{k-1}
=\prodik \bigpar{\bigpar{2m_i+o(1)}n} -\On{k-1}
\notag\\&
%=\prodik \xpar{2m_i}\cdot n^k +\on{k}.
=\Pm n^k +\on{k},
\end{align}
which proves \eqref{lc1}
\end{proof}

In the remainder of this section, $\bgs=\bgs_n$ is a uniformly random
\cperm{} in  $\fsc_{n+1,n+1-2g_n}$.

We begin by calculating the expectation of $\ccl(T_n,\bgs)$.
\begin{lemma}\label{LC2}
  Let $(T_n)$ be a good sequence of trees.
Then, for every $\ell\ge1$, as \ntoo,
\begin{align}\label{lc2a}
  \E \ccl(T_n,\bgs) \to\gl_\ell
\end{align}
given by \eqref{gli}.
Furthermore,
\begin{align}\label{lc2b}
\E\tccl(T_n,\bgs)-  \E \ccl(T_n,\bgs) \to 0.
\end{align}
\end{lemma}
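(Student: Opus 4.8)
The idea is to compute $\E\ccl(T_n,\bgs)$ and $\E\tccl(T_n,\bgs)$ by summing over equivalence classes of path-lists. Since $\ccl(T_n,\bgs)=|\CCl(T_n,\bgs)|$ counts those $\bP\in\hPPl(T_n)$ satisfying \ref{C1}--\ref{C3}, we write
\begin{align}
\E\ccl(T_n,\bgs)=\sum_{\bP\in\hPPl(T_n)}\P\bigpar{\bP\text{ satisfies \ref{C2} and \ref{C3}}},
\end{align}
and group the sum according to the length vector $\bm=(|p_1|,\dots,|p_k|)$, of which there are only finitely many with $|\bm|=\ell$. For a fixed $\bm$ with $s(\bm)=k$ and $|\bm|=\ell$, the lists $\bP$ with that length profile, counted \emph{up to equivalence}, number $\pM(T_n)/\Pm$ (each equivalence class has $2s(\bP)=\Pm/\prod\!... $ no --- each class has $2s(\bP)$ elements and $\Pm=\prod 2m_i$ overcounts; I must instead take $\hPPl$, where the class $[\bP]$ has exactly $2k$ elements, so the number of classes with profile $\bm$ is $\pM(T_n)/(2k)$, provided all cyclic/reversal images are distinct, which fails only on a negligible set of $\bP$). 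By \refL{LC1}, $\pM(T_n)=\Pm n^k+\on{k}$.

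\textbf{Second step: the permutation probability.} For $\bP$ with $s(\bP)=k$, condition \ref{C2} asks that $k$ specified pairs of vertices of $T_n$ be $\bgs$-equivalent (merged), forming a cyclic pattern; condition \ref{C3} asks that no \emph{other} pair among the $\le 2k$ endpoints (or more generally the vertices on the paths, but after imposing \ref{C2} the relevant further identifications are among endpoints) be merged, and that the $k$ merging pairs lie in $k$ distinct cycles. This is exactly the event analyzed in \refLs{lem_proba_merging} and~\ref{lem_proba_merging_hard}: by \refL{lem_proba_merging} the probability that the $k$ pairs are merged into $k$ distinct cycles is $(1-o(1))(2\gam/n)^k$, and by \refL{lem_proba_merging_hard} (with $r=k$ there) adding the finitely many ``no extra merging'' constraints only costs a further $1-o(1)$ factor. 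One subtlety: the endpoints of the paths $p_i$ need not be distinct vertices of $T_n$ in general, but when two endpoints coincide the count of such $\bP$ is $\On{k-1}$ (the shared vertex is a branch point, so the union is a smaller subtree, handled by \eqref{ant} as in \refL{LC1}), hence negligible; so we may assume the $2k$ endpoints are distinct. Similarly, for \ref{C2} to be consistent with the cyclic structure we need the paired endpoints genuinely distinct, and the prior probability bound applies.

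\textbf{Putting it together and the $\tccl$ comparison.} Combining, for each profile $\bm$ with $s(\bm)=k$, $|\bm|=\ell$,
\begin{align}
\sum_{\bP\in\hPPl,\ \text{profile }\bm}\P(\ref{C2}\wedge\ref{C3})
=\frac{\Pm n^k+\on{k}}{2k}\cdot(1-o(1))\Bigpar{\frac{2\gam}{n}}^{k}
=\frac{\Pm(2\gam)^k}{2k}+o(1).
\end{align}
Summing over all compositions $\bm$ of $\ell$ and recognizing $\Pm=\prod_{i=1}^k 2m_i$, one gets
\begin{align}
\E\ccl(T_n,\bgs)\to\sum_{k\ge1}\frac{1}{2k}\sum_{\substack{m_1+\dots+m_k=\ell\\ m_i\ge1}}\prod_{i=1}^k(2m_i)(2\gam)
=\sum_{k\ge1}\frac{(4\gam)^k}{2k}\,[x^\ell]\Bigpar{\sum_{m\ge1}m x^m}^{k},
\end{align}
and since $\sum_{m\ge1}mx^m=x/(1-x)^2$, the inner generating function is $\bigpar{4\gam x/(1-x)^2}^k$, so $\E\ccl(T_n,\bgs)\to[x^\ell]\,\tfrac12\log\tfrac{1}{1-4\gam x/(1-x)^2}=[x^\ell]\,\tfrac12\log\tfrac{(1-x)^2}{(1-x)^2-4\gam x}$. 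A short computation using $\gam=\tau^2/(1-\tau^2)$ (from \eqref{c1b}) factors $(1-x)^2-4\gam x$ with roots $\tfrac{1-\tau}{1+\tau}$ and $\tfrac{1+\tau}{1-\tau}$, yielding $\tfrac12\log\tfrac{1}{(1-\frac{1+\tau}{1-\tau}x)(1-\frac{1-\tau}{1+\tau}x)}=\sum_\ell\tfrac{1}{2\ell}\bigpar{(\tfrac{1+\tau}{1-\tau})^\ell+(\tfrac{1-\tau}{1+\tau})^\ell}\,x^\ell$ minus the $k$-independent correction; matching constant terms gives exactly $\gl_\ell$ of \eqref{gli}. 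For \eqref{lc2b}: $\tccl-\ccl$ counts lists satisfying \ref{C2} but \emph{not} \ref{C3}, i.e.\ with some extra merging among the path vertices. The expectation of this difference is bounded, using \refL{lem_points_in_cycles} (some $k'<k$ distinct cycles accommodating the endpoints, cost $O(n^{-(k-k'-1/4)})$ extra) or simply the observation that each ``extra coincidence'' forces one of the $k$ independent $2\gam/n$ factors to be replaced by something $o(1/n)$ while the combinatorial count stays $O(n^k)$, so $\E\tccl-\E\ccl=o(1)$.

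\textbf{Main obstacle.} The bookkeeping obstacle is correctly handling the equivalence-class counting (the factor $2k$ vs.\ $2s(\bP)$, and degenerate lists fixed by a nontrivial symmetry) together with the case where path endpoints collide, simultaneously with applying \refLs{lem_proba_merging} and~\ref{lem_proba_merging_hard} at the right level of generality (the cyclic pattern in \ref{C2}, not just disjoint pairs). The analytic obstacle — extracting the generating function and checking it matches \eqref{gli} via the root factorization of $(1-x)^2-4\gam x$ — is routine once $\gam=\tau^2/(1-\tau^2)$ is in hand.
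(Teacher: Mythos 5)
Your proposal is correct and follows essentially the same route as the paper: decompose cycles into equivalence classes of disjoint path-lists, count the lists of each length profile via \refL{LC1}, evaluate the merging probabilities via the lemmas of Section~\ref{S:perm}, and extract $\gl_\ell$ from the same logarithmic generating function with $\gam=\tau^2/(1-\tau^2)$; the only organizational difference is that you compute $\E\ccl(T_n,\bgs)$ directly (using \refL{lem_proba_merging_hard} for the matching lower bound), whereas the paper first computes $\E\tccl(T_n,\bgs)$ under \ref{C2} alone and then shows the difference vanishes. One imprecision worth fixing: for \eqref{lc2b} your stated justification only accounts for the $2k$ endpoints falling into fewer than $k$ cycles, which misses \ref{C3}-violations caused by interior path vertices; the paper's remedy is to apply \refL{lem_points_in_cycles} to all $\ell+k$ vertices of the paths, which, when \ref{C2} holds and \ref{C3} fails, lie in at most $\ell-1$ cycles of $\bgs$, giving probability $o(n^{-k})$.
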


\begin{proof}
  We begin with the simpler $\tccl(T_n,\bgs)=|\tCCl(T_n,\bgs)|$.
For each given list $\bP=(p_1,\dots,p_k)\in\hPPl(T_n)$,
let $\tpi(\bP)$ be the probability that $\bP\in \tCCl(T_n,\bgs)$,
in other words, the probability that \ref{C2} holds. % for  $(T_n,\bgs)$.
Then, by definitions and symmetry,
\begin{align}\label{lc2c}
  \E\tccl(T_n,\bgs) 
= \sum_{\bP\in\hPPl(T_n)}\tpi(\bP)
=\sum_{\bP\in\PPl(T_n)}\frac{\tpi(\bP)}{2s(\bP)}.
\end{align}

To find $\tpi(\bP)$,
we may relabel the $2k$ endpoints in $\Ext(\bP)$ as $1,\dots,2k$ 
in an order such  that \ref{C2} becomes $2i-1\sim 2i$ for $i=1,\dots,k$,
\ie, that $2i-1$ and $i$ belong to the same cycle in $\bgs$.
There are two cases: either these $k$ cycles are distinct, or at least two
of them coincide.
The first event is 
$\cE_{n,g_n}^{(k)}$ in \refL{lem_proba_merging}, and that lemma shows that
its probability is
\begin{equation}\label{ERII}
\P_{n,g_n}^{(k)}=
\P(\cE_{n,g_n}^{(k)})
=(1-o(1))\xpar{2\gamma}^kn^{-k}.
\end{equation}
The second event means that $1,\dots,2k$ belong to at most $k-1$ different
cycles of $\bgs$. Hence, \refL{lem_points_in_cycles} shows that the
probability of this event is
\begin{align}\label{lc2d}
  \sum_{t=1}^{k-1}O\bigpar{n^{t-2k+1/4}}
=O\bigpar{n^{-k-3/4}}
=o\bigpar{n^{-k}}.
\end{align}
Summing \eqref{ERII} and \eqref{lc2d}, we see that 
\begin{align}\label{pi1}
  \tpi(\bP)=\bigpar{(2\gam)^k+o(1)} n^{-k},
\end{align}
where $k=s(\bP)$.

We develop the sum in \eqref{lc2c} 
using \eqref{pi1} and \eqref{lc1}, %\eqref{eq_Pm}, 
and obtain
\begin{align}\label{pia}
  \E\tccl(T_n,\bgs) &
=\sum_{|\bm|=\ell} |\PM(T_n)|\frac{1}{2s(\bm)} 
\bigpar{(2\gam)^{s(\bm)}+o(1)} n^{-s(\bm)}
\notag\\&
=\sum_{|\bm|=\ell} \frac{\Pm}{2s(\bm)}(2\gam)^{s(\bm)}+o(1).
\end{align}

Next, we consider the difference $\tccl(T_n,\bgs)-\ccl(T_n,\bgs) $.
A given list $\bP=(p_1,\dots,p_k)\in\hPPl$ belongs to 
$\tCCl(T_n,\bgs)\setminus \CCl(T_n,\bgs)$ if
it satisfies \ref{C1}--\ref{C2} but not \ref{C3}. 
Then the $\sum_1^k(|p_i|+1)=\ell+k$ vertices in the paths belong to at most
$\ell+k - (k+1)=\ell-1$ cycles in $\bgs$. 
It follows from \refL{lem_points_in_cycles},
similarly to \eqref{lc2d} above, that the probability of this event is
\begin{align}
  \label{pilo}
O\bigpar{n^{(\ell-1)-(\ell+k)+1/4)}}=o\bigpar{n^{-k}}.
\end{align}
Hence, arguing as in \eqref{lc2c}--\eqref{pia}, but more crudely, using
\eqref{lc1} and \eqref{pilo}, 
\begin{align}\label{pib}
  \E\bigsqpar{\tccl(T_n,\bgs) -\ccl(T_n,\bgs)} &
=\sum_{|\bm|=\ell} |\PM(T_n)|\frac{1}{2s(\bm)} 
\on{-s(\bm)}
\notag\\&
=o(1),
\end{align}
which proves \eqref{lc2b}.
Together with \eqref{pia}, this also shows \eqref{lc2a},
with 
\begin{align}\label{la}
\gl_\ell&:=
\sum_{|\mathbf{m}|=\ell} 
 \frac{\Pm}{2s(\mathbf{m})}(2\gamma)^{s(\mathbf{m})}.
\end{align}

It remains to calculate this sum, and verify \eqref{gli}.
%Now, it remains to calculate the expectations.
Define the generating functions (\eg{} as formal power series), 
\begin{align}\label{F}
  F(x):=\sum_{m\geq 1} 2mx^m
\end{align}
and
\begin{align}\label{G}
G(x,y):=\sum_{\ell\geq 1} \frac{F(x)^\ell}{2\ell}y^\ell.
\end{align}
Then, by \eqref{eq_Pm}, for any $k\ge1$,
\begin{align}
  \sum_{s(\bm)=k} \Pm x^{|\bm|}=F(x)^k
\end{align}
and thus, using also \eqref{la},
\begin{align}\label{lgl}
%\gL(x):=  
\sum_{\ell\ge1}\gl_\ell x^\ell
=\sum_{\bm}
 \frac{\Pm(2\gamma)^{s(\mathbf{m})}}{2s(\mathbf{m})} x^{|\bm|}
=\sum_{k\geq 1}\frac{(2\gamma)^k}{2k} F(x)^k
=G(x,2\gamma).
\end{align}
On the other hand, we have
\begin{equation}
F(x)=\frac{2x}{(1-x)^2}
\end{equation}
and 
\begin{equation}
G(x,y)=\frac{1}{2}\log\left(\frac{1}{1-F(x)y}\right).
\end{equation}
Hence, \eqref{lgl} yields
\begin{align}\label{lo}
%  \gL(x)=
\sum_{\ell\ge1}\gl_\ell x^\ell=
\frac{1}{2}\log\left(\frac{1}{1-2\gamma\frac{2x}{(1-x)^2}}\right)
=\frac{1}{2}\log\left(\frac{(1-x)^2}{1-(2+4\gamma)x+x^2}\right).
\end{align}
Let $\rho_\pm$ be the roots of $x^2-(2+4\gam)x+1=0$.
Then 
\begin{align}
  \rho_\pm:=1+2\gam\pm\sqrt{4\gam(1+\gam)},
\end{align}
which by \eqref{c1b} yields, after a short calculation,
\begin{align}
  \rho_+=\frac{1+\tau}{1-\tau},
\qquad
  \rho_-=\frac{1-\tau}{1+\tau}.
\end{align}
Furthermore, \eqref{lo} yields
\begin{align}
  \sum_{\ell\ge1}\gl_\ell x^\ell &
= \log(1-x)-\frac12\log\bigpar{(1-\rho_+x)(1-\rho_-x)}
\notag\\&
= \log(1-x)-\frac12\log\bigpar{1-\rho_+x}-\frac12\log\bigpar{1-\rho_-x}
\end{align}
and \eqref{gli} follows,
which concludes the proof.
\end{proof}

We next extend \refL{LC2} and  show convergence also in distribution.

\begin{lemma}
  \label{LC3}
  Let $(T_n)$ be a good sequence of trees.
Then, for every $\ell\ge1$, as \ntoo,
\begin{align}\label{lc3}
 \ccl\tngs \pto\Po(\gl_\ell),
\end{align}
with $\gl_\ell$ given by \eqref{gli}.
Moreover, \eqref{lc3} holds jointly for any finite set of $\ell\ge1$,
with independent limits $\Po(\gl_\ell)$.
\end{lemma}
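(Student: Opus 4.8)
The strategy is the classical method of moments for Poisson convergence: I will show that for every fixed collection of lengths $\ell_1,\dots,\ell_p$ and every collection of nonnegative integers $a_1,\dots,a_p$, the joint factorial moment $\E\prod_{j=1}^p (\ccx{\ell_j}(T_n,\bgs))_{a_j}$ converges to $\prod_{j=1}^p \gl_{\ell_j}^{a_j}$, which by a standard criterion (see \eg{} the moment characterization of Poisson limits) gives \eqref{lc3} jointly with independent limits. Since \refL{LC2} already furnishes the first moments $\E\ccl(T_n,\bgs)\to\gl_\ell$, the bulk of the work is controlling higher (mixed) factorial moments.

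\textbf{Key steps.} First I would expand $\E\prod_j (\ccx{\ell_j})_{a_j}$ as a sum over ordered tuples of $r:=\sum_j a_j$ \emph{distinct} cycle-structures, \ie, over $r$-tuples $(\bP_1,\dots,\bP_r)$ of elements of the various $\hPPx{\ell_j}(T_n)$, each satisfying \ref{C1}, and I would split this sum according to whether the underlying path-sets in $T_n$ are \emph{vertex-disjoint} or not. For the vertex-disjoint term: the number of ways to choose $r$ vertex-disjoint families of paths with prescribed total lengths is, by the same inclusion-exclusion argument as in \refL{LC1} (using goodness, \ie{} \eqref{apl}--\eqref{ant}), equal to $\prod_j \pMx{j} n^{s_j} + o(n^{\sum_j s_j})$ where $s_j$ are the numbers of paths; and the probability that all $\sum_j s_j$ disjoint endpoint-pairings are realized by $\sum_j s_j$ \emph{distinct} cycles of $\bgs$ is $(1-o(1))(2\gam/n)^{\sum_j s_j}$ — this is exactly the content of \refL{lem_proba_merging} (whose proof via \refL{LQC1} and the event $\cE$ works verbatim for any fixed number of required merges), combined with \refL{lem_points_in_cycles} and \refL{lem_proba_merging_hard} to discard the contributions where some of these cycles coincide or where the extra vertices land badly (these are $o(n^{-\sum_j s_j})$, exactly as in \eqref{lc2d} and \eqref{pilo}). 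Multiplying and summing over the length-compositions $\bm$ reproduces, by the generating-function computation already carried out in \refL{LC2}, the product $\prod_j \gl_{\ell_j}^{a_j}$. For the non-disjoint term: if two of the chosen path-families share a vertex, their union is a subtree $\bt$ with strictly fewer than $\sum(\text{total lengths})+\sum(\text{number of paths})$ vertices, so by goodness the number of such configurations is $O(n^{(\sum_j s_j)-1})$, and even after multiplying by the crude probability bound $O(n^{-(\sum_j s_j - 1)})$ coming from \refL{lem_points_in_cycles} (at most $\sum s_j$ merges forced on fewer than $2\sum s_j$ points) this contributes $o(1)$; here the extra $1/4$ in the exponent of \refL{lem_points_in_cycles} again provides the needed slack.

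\textbf{Main obstacle.} The genuinely delicate point is the bookkeeping in the non-disjoint case: one must verify that \emph{every} way two or more families of paths can overlap (sharing one vertex, sharing an edge, one path being forced to hit another path's interior, etc.) produces a vertex-count deficit that strictly beats the number of cycle-merges demanded of $\bgs$. This is morally the same ``each coincidence costs a factor $n^{-1}$, each constraint gains a factor $n^{-1}$, and coincidences win'' phenomenon as in \eqref{lc2d}--\eqref{pilo}, but phrased for several independent cycle-structures at once, so it requires careful uniform estimates — I would organize it by first reducing (via \refL{lem_proba_merging_hard}) to the vertex-disjoint skeleton of merges and then invoking \refL{lem_points_in_cycles} with the sharp exponent. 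A secondary technical nuisance is the passage from the raw count of tuples of lists in $\hPPx{\ell_j}$ to factorial moments, \ie, correctly accounting for the factor $2s(\bP)$ from equivalence classes in \eqref{lc2c} and for the distinctness of the $r$ cycle-structures; but this is routine once the two main asymptotic inputs above are in place.
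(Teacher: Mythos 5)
Your overall strategy is the same as the paper's: reduce to joint factorial moments, extract the main term from configurations whose paths have pairwise disjoint endpoint sets (counted as in \refL{LC1}, with the merge probability $(1+o(1))(2\gam/n)^K$ from \refL{lem_proba_merging} and the coinciding-cycle corrections from \refL{lem_points_in_cycles}), and recycle the generating-function computation of \refL{LC2}. Up to the omitted reduction from $\ccl\tngs$ to $\tccl\tngs$ via \eqref{lc2b}, this matches the paper's proof of \refL{LC3}.

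However, there is a genuine gap in your error term for the overlapping configurations. Your bound is $O(n^{K-1})\cdot O\bigpar{n^{-(K-1)}}=O(1)$ with $K=\sum_j s_j$, and you invoke the $1/4$ in \refL{lem_points_in_cycles} for the remaining slack; but this does not close in general. When lists share \emph{endpoints} (not just interior vertices), the $v<2K$ distinct endpoints need only lie in at most $\gq_R$ cycles, where $\gq_R$ is the number of components of the merge constraints, so \refL{lem_points_in_cycles} gives probability $O(n^{\gq_R-v+1/4})$, while the number of configurations is $O(n^{\gq_G})$ with $\gq_G$ the number of components of the endpoint-sharing relation. Since each component has at least two vertices, one only gets $\gq_G+\gq_R\le v$, and the product $O(n^{\gq_G+\gq_R-v+1/4})$ is $O(n^{1/4})$ — not $o(1)$ — in the borderline case $\gq_G+\gq_R=v$. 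The paper organizes this with a two-coloured auxiliary graph $H$ and observes that in the borderline case every component is an isolated edge, so two paths from different lists sharing an endpoint must share \emph{both} endpoints, hence coincide (a path in a tree is determined by its endpoints); the red edges then propagate this coincidence around the whole lists, forcing $\bP(i_1,j_1)\simeqx\bP(i_2,j_2)$ — exactly the configurations excluded by the factorial-moment sum over inequivalent lists. So the distinctness/equivalence bookkeeping that you set aside as ``routine'' is in fact the load-bearing step that disposes of the only overlapping configurations not killed by counting; without it the error term is not $o(1)$.
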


\textit{Idea of the proof:} This proof is a bit long and technical, but the general idea is quite classical. We will use the method of moments (by considering factorial moments), and roughly speaking, the proof amounts to showing that given a finite number of cycles taken uniformly conditionally on being pairwise distinct, they are pairwise disjoint with high probability. Then, calculating the value of the desired expectations will be straightforward from the results we obtained beforehand. In order to show that our uniform cycles are pairwise disjoint, we will again use their decomposition into lists of paths, and, thanks to Lemmas~\ref{LC1} and~\ref{LC2}, we will only have to consider possible intersections at the endpoints of the paths.

\begin{proof}
First, \eqref{lc2b} implies that $\P\bigpar{\tccl\tngs\neq\ccl\tngs}\to0$;
thus it suffices to show \eqref{lc3} for the simpler $\tccl\tngs$ instead.

We use the method of moments, so we want to estimate the factorial moment
$  \E \prod_{i=1}^q \bigpar{\tcclx{i}(T_n)}_{r_i} $
for any given positive integers $q$, $\ell_1<\dots<\ell_q$ and $r_1,\dots,r_q$.
We argue similarly as in the special case $q=1$ and $r_1=1$  in \refL{LC2},
and write this expectation as
\begin{align}\label{ab}
  \E \prod_{i=1}^q \bigpar{\tcclx{i}(T_n)}_{r_i} 
=\hsumx_{(\bP(i,j))_{ij}} \pi((\bP(i,j))_{ij}),
\end{align}
where we sum over all sequences of distinct lists 
%$(\bP(i,j))_{\substack{1\le i\le  q\\1\le j\le r_i}}$ 
$(\bP(i,j))_{1\le i\le  q,\ 1\le j\le r_i}$ 
such that 
$\bP(i,j)\in \hPPlx{i}(T_n)$, 
and $\pi((\bP(i,j))_{ij})$ is the probability that every
$\bP(i,j)\in\tCClx{i}(T_n)$. 
Recalling the definition of $\hPPl$, 
we can rewrite \eqref{ab} as
\begin{align}\label{ac}
  \E \prod_{i=1}^q \bigpar{\tCClx{i}(T_n)}_{r_i} 
= \sumx_{(\bP(i,j))_{ij}}\frac{ \pi((\bP(i,j))_{ij})}{\prod_{i,j}{2s(\bP(i,j))}},
\end{align}
where we now sum over all sequences of  lists 
$(\bP(i,j))_{ij}$ such that 
$\bP(i,j)\in \PPlx{i}(T_n)$ and no two $\bP(i,j)$ are equivalent (for
$\simeqx$).

First, we show that the $\Ext(\bP(i,j))$ are pairwise disjoint whp. 

For each such sequence $(\bP(i,j))_{ij}$, define a graph $H$ with vertex
set $V(H):=\bigcup_{i,j}\Ext(\bP(i,j))$, the set of endpoints of all
participating paths, and edges of two colours as follows:
For each list $\bP(i,j)=(p_\mnu)_1^k$, add for each $\mnu$ a \emph{green} edge
between $\start(p_\mnu)$ and $\eend(p_\mnu)$, 
and a \emph{red} edge between $\eend(p_\mnu)$ and $\start(p_{\mnu+1})$ (see Figure~\ref{fig_path_graph} for an example). 
(We use here and below the convention $p_{k+1}:=p_1$.) 
Hence, by the definitions above,
every $\bP(i,j)\in\tCClx{i}(T_n)$
if and only if 
each red edge in $H$ joins two vertices in the same cycle of $\bgs$.
Thus, $\pi((\bP(i,j))_{ij})$ in \eqref{ac} is the probability of this event.

\begin{figure}
\center
\includegraphics[scale=0.8]{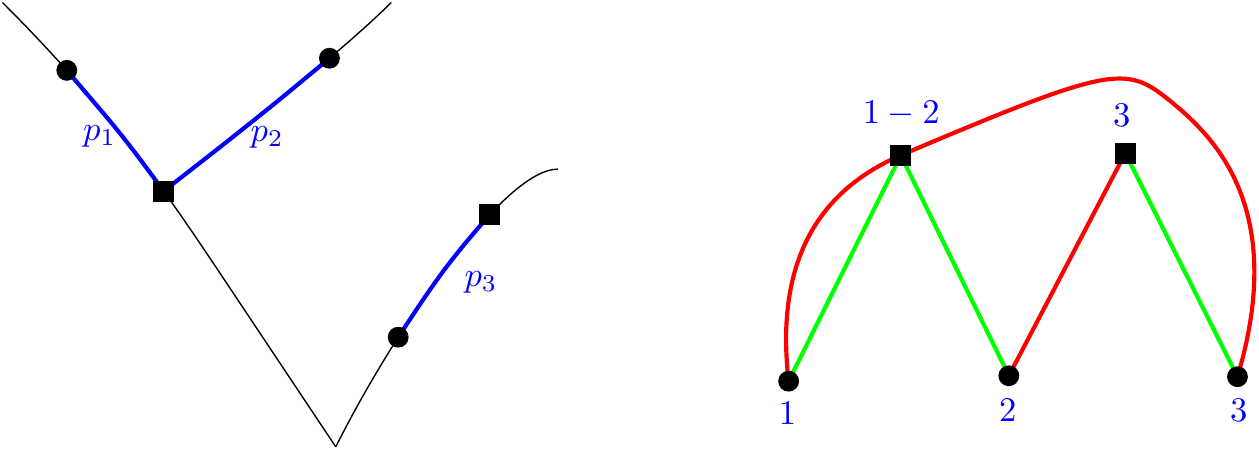}
\caption{Three paths in a tree (left) and their associated graph $H$ (right). The paths are in blue, the start of a path is represented as a square and its end, as a dot.}\label{fig_path_graph}
\end{figure}

For each  graph $H$ constructed in this way, let $H_G$ %[$H_R$]
be the subgraph consisting of all green %[red] 
edges, and say that a connected component of $H_G$ is a \emph{green
  component} of $H$. 
Define a \emph{red component} in the same way, and let $\gq_G(H)$ and $\gq_R(H)$ be
the numbers of green and red components, respectively.

Let $M_H=M_H(n)$ be the number of terms in \eqref{ac} with a given graph $H$.
(For some fixed $q$, $\ell_1,\dots,\ell_q$ and $r_1,\dots,r_q$.)
We estimate $M_H$ as follows.
Each green component of $H$ corresponds to some paths $p_{i,j,\mnu}$ such that
their union is a connected subtree $\bt$ of $T_n$. All these paths have
lengths bounded by $\max_i\ell_i$, so the union $\bt$ has bounded size.
Thus, regarding $\bt$ as an unlabelled tree $\bbt$, 
there is only a finite set of possible choices of $\bbt$. 
Hence, the assumption \eqref{ant} implies that there is $O(n)$ possible 
choices of the subgraph $\bt$ in $T_n$, for each green component in $H$. 
Moreover, for each choice of $\bt$,
there are $O(1)$ choices of each path $p_{i,j,\mnu}$ in $\bt$, so for each green
component in $H$, the paths $p_{i,j,\mnu}$ that correspond to edges in the
component can be chosen in $O(n)$ ways.
Consequently, we have
\begin{align}\label{mh}
  M_H= O\bigpar{n^{\gq_G(H)}}.
\end{align}

Moreover, we have seen that $\pi((\bP(i,j))_{ij})$ in \eqref{ac} is the
probability that each red component lies in a single cycle of $\sig$.
This entails that the $v(H)$ vertices in $H$
lie in at most $\gq_R$ different cycles of $\bgs$,
and thus \refL{lem_points_in_cycles} shows that
\begin{align}\label{ad}
  \pi((\bP(i,j))_{ij}) = O\bigpar{n^{\gq_R(H)-v(H)+1/4}}.
\end{align}

Consequently, the total contribution to \eqref{ac} for all sequences of
lists yielding a given $H$ is, by \eqref{mh} and \eqref{ad},
\begin{align}
  \label{ae}
O\bigpar{n^{\gq_G(H)+\gq_R(H)-v(H)+1/4}}.
\end{align}
Since each green or red component has size at least 2, it follows that
$v(H)\ge 2\gq_G(H)$ and $v(H)\ge 2\gq_R(H)$, and thus $\gq_G(H)+\gq_R(H) \le v(H)$.
If we here have strict inequality, then \eqref{ae} shows that the
contribution is $o(1)$ and may be ignored. (There is only a finite number of
possible $H$ to consider.)

Hence, it suffices to consider the case $\gq_G(H)=\gq_R(H)=v(H)/2$. 
This implies that all green or red components have size 2, and thus are
isolated edges. It follows that if two 
different lists $\bP(i_1,j_1)$ and $\bP(i_2,j_2)$ contain two paths
$p_{i_1,j_1,\mnu_1}$ and $p_{i_2,j_2,\mnu_2}$ 
that have a common endpoint, then these paths have to coincide
(up to orientation).
Furthermore, if they coincide, and have, say, the same orientation so
$\eend(p_{i_1,j_1,\mnu_1})=\eend(p_{i_2,j_2,\mnu_2})$, then the red edges
from that vertex have to coincide, so 
$\start(p_{i_1,j_1,\mnu_1+1})=\start(p_{i_2,j_2,\mnu_2+1})$.
It follows easily 
that the two lists $\bP(i_1,j_1)$ and $\bP(i_2,j_2)$ are equivalent
in the sense $\bP(i_1,j_1)\equiv\bP(i_2,j_2)$ defined above.
However, we have excluded this possibility, and this contradiction shows
that  all paths $p_{i,j,\mnu}$ in the lists have disjoint sets of endpoints 
$\Ext(p_{i,j,\mnu})$. 

Consider now this case, and assume, moreover, that
for all $i$ and $j$, we have $s(\bP(i,j))=k(i,j)$
and $\bP(i,j)\in \PMx{\bm(i,j)}$ for some given $k(i,j)$ and
$\bm(i,j)=(m_{i,j,1},\dots,m_{i,j,k(i,j)})$ such that $|\bm(i,j)|=\ell_i$.
Then we can write $\bP(i,j)=(p_{i,j,1},\dots,p_{i,j,k(i,j)})$, where all paths
$p_{i,j,\mnu}$ have disjoint endpoints and $|p_{i,j,\mnu}|=m_{i,j,\mnu}$.
It follows as in the proof of \refL{LC1} that the number of such 
sequences of lists $(\bP(i,j))_{ij}$ is, with $K:=\sum_{i,j}k(i,j)$, the
total number of paths in the lists,
and using \eqref{eq_Pm},
\begin{align}\label{af}
  \prod_{i,j,\mnu} (2m_{i,j,\mnu}n) + \on{K}
=  \prod_{i,j} \Pmx{i,j} n^K + \on{K}
\end{align}
Furthermore, in this case,
the set $V(H)$ is the disjoint union of all
$\Ext(p_{i,j,\nu})$, which have 2 elements each, and thus
\begin{align}\label{ag}
  v(H)=2\sum_{i,j}k(i,j)=2K.
\end{align}
Hence, $\pi\bigpar{(\bP(i,j))_{ij}}$ is the probability 
that $K$ given pairs of vertices in $T_n$ belong to the same cycles in $\bgs$.
As in the proof of \refL{LC2}, it follows from \refLs{lem_proba_merging} and
\ref{lem_points_in_cycles} that, using \eqref{la},
\begin{align}\label{ah}
\pi\bigpar{(\bP(i,j))_{ij}}
  = (2\gam)^K n^{-K} + \on{-K}.
\end{align}

From \eqref{ac}, \eqref{af} and \eqref{ah}, we now obtain
\begin{align}\label{ach}
  \E \prod_{i=1}^q \bigpar{\tCClx{i}(T_n)}_{r_i} 
%\\
&=
\sum_{|\bm(i,j)|=\ell_i \forall i,j}
\frac{\prod_{i,j}\Pmx{i,j}}{\prod_{i,j}{2s(\bm(i,j))}}(2\gam)^K + o(1)
\notag\\
&=
\sum_{|\bm(i,j)|=\ell_i \forall i,j}
\prod_{i,j}\frac{\Pmx{i,j}(2\gam)^{s(\bm(i,j))}}{2s(\bm(i,j))}+ o(1)
\notag\\
&=
\prod_{i,j}\gl_{\ell_i}+o(1)
=\prodik \gl_{\ell_i}^{r_i}+o(1).
\end{align}

This proves the desired convergence of the factorial moments, and
the method of moments yields the result.
(For $\tccl\tngs$, which is enough as said at the beginning of the proof.)
\end{proof}

To prove the main theorem, %\refT{thm_cycles}. 
it remains only to replace the deterministic trees
$T_n$ in \refL{LC3} by the random $\T$.

\begin{proof}[Proof of Theorem~\ref{thm_cycles}]

Recall that, for each $n$, $\T=\T_n$ is a uniformly random tree in $\cT_n$.
By \refLs{lem_paths} and \ref{LNt}, 
\eqref{apl} and \eqref{ant} hold in probability if we take $T_n$ as the
random tree $\T$.
We can regard this as convergence in probability of infinite sequences
indexed by the countable set $\set{\ell}\cup\set{\bt}$, \ie, convergence in
$\bbR^\infty$. 
By the Skorohod coupling theorem \cite[Theorem~4.30]{Kallenberg},
we may assume that this convergence actually holds almost surely, i.e., that
\eqref{apl} and \eqref{ant} hold \as{} for every $\ell$ and $\bt$.
In other words, we may assume that the random trees $\T_n$ for different $n$
are coupled such that the sequence $(\T_n)$ is good a.s.
%(An alternative argument is that \eqref{apl} and \eqref{ant} imply
%convergence \as{} for some subsequence, and by a diagonal argument we  may
%find a subsequence where \eqref{apl} and \eqref{ant} hold \as{} for all
%$\ell$ and $\bt$, i.e., $\T_n$ is good along the subsequence. 
%The result then follows as below along the subsequence,
%and the full result follows by a standard subsubsequence argument, see \eg{}
%\cite[Section 5.7]{Gut}.)

We now condition on the sequence $(\T_n)$. We have just seen that \refL{LC3}
applies a.s., and thus 
\eqref{tc} holds for the conditional distributions, i.e.
\begin{align}\label{tcc}
\bigpar{(C_{n,g_n}^{(1)},C_{n,g_n}^{(2)},\ldots,C_{n,g_n}^{(M)}) \mid \T_n}
\dto \Po(\lambda_1,\lambda_2,\ldots,\lambda_M)
\qquad \text{a.s.}
\end{align}
This immediately implies the unconditional \eqref{tc}.
\end{proof}

\section{Vertex degrees}\label{Sdegree}

In this section, we prove Theorem~\ref{TD}.
Again, we may by \refT{thm_ctrees} consider the underlying graph of $(\T,\sig)$.
It is convenient to first consider deterministic trees and permutations,
with only the relative position of them random.

\begin{lemma}\label{LD}
  Let $(g_n)$ be a sequence with $n-2g_n\to\infty$ as \ntoo.
  Let $(T_n,\gs_n)$ be a given (deterministic)
sequence, where $T_n$ is a plane tree of size
  $n$, and $\gs_n$ is a permutation of $[n+1]$ with exactly $m$ cycles, 
where $m=m_n:=n+1-2g_n$.
Let $\T_n$ be $T_n$ with the vertices labeled $1,\dots,n+1$ uniformly at
random, and let 
%$G_n$ be the underlying graph of the
$\NNng{k}$ be the number of vertices of degree $k$ in 
the underlying graph of the
corresponding decorated tree $(\T_n,\gs_n)$.

Let $n\kkk_n$ be the number of vertices of degree $k$ in $T_n$, and let
$m\kkl_n$ be the number of cycles of length $\ell$ in $\gs_n$.
Suppose that $(\rho_k)$ and $(\pi_\ell)$ are probability distributions on
\set{1,2,\dots} such that, as \ntoo, 
\begin{align}
  \label{ld1a}
n\kkk_n/n&\to \rho_k, \quad k\ge1,
\\\label{ld1b}
m\kkl_n/m&\to \pi_\ell, \quad \ell\ge1.
\end{align}
Then,
\begin{align}\label{ld2}
  \NNng{k}/m\pto \kk_k,
\qquad k\ge1,
\end{align}
where
$(\kk_k)$ is a probability distribution 
which equals the distribution of the random sum $\sum_{i=1}^M Y_i$, where
$M, Y_1, Y_2,\dots$ are independent, $M$ has the distribution $(\pi_\ell)$
and each $Y_i$ has the distribution $(\rho_k)$.
Equivalently, $(\kk_k)$ is 
given by 
the probability generating function
\begin{align}\label{ld3}
H_\kk(x):=\sumk \kk_k x^k = H_\pi\bigpar{H_\rho(x)},
\end{align}
where $H_\rho$ and $H_\pi$ are the \pgf{s} of $(\rho_k)$ and $(\pi_\ell)$.
\end{lemma}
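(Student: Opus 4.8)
The plan is to fix the deterministic data $(T_n,\gs_n)$ and study the random labelling. The key observation is that a vertex $v$ of the underlying graph of $(\T_n,\gs_n)$ corresponds to a cycle of $\gs_n$, and its degree is the sum of the degrees in $\T_n$ of the vertices of $\T_n$ that get merged into $v$. So if $C$ is a cycle of $\gs_n$ of length $\ell$, then after the uniformly random labelling, the $\ell$ tree-vertices assigned to the $\ell$ positions of $C$ are a uniformly random $\ell$-subset (without replacement) of the $n+1$ vertices of $\T_n$, and the degree of $v$ is the sum of their $\T_n$-degrees. First I would set up this description carefully and note that $\NNng{k}=\sum_{\ell} \#\{$cycles $C$ of length $\ell$ with merged degree $=k\}$.

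Next I would compute $\E[\NNng{k}\mid \text{labelling data}]$, or rather just $\E \NNng{k}$. Fix a cycle of length $\ell$; the probability that the sum of the $\T_n$-degrees of a uniformly random $\ell$-subset equals $k$ is, since sampling without replacement from a near-uniform population with $n+1\to\infty$ is asymptotically the same as sampling with replacement, close to $\P(Y_1+\dots+Y_\ell=k)$ where the $Y_i$ are i.i.d.\ with distribution $(\kkx{\cdot}_n)\to(\rho_\cdot)$. Summing over the $m\kkl_n$ cycles of each length $\ell$ and dividing by $m$, using \eqref{ld1b} and dominated convergence (the degrees in a tree of size $n$ have bounded mean, and $(\pi_\ell)$ has a tail we can control — in our application $(\pi_\ell)$ has exponentially small tails by \refL{T1}, but for the lemma one only needs \eqref{ld1b} plus $\sum\ell\pi_\ell<\infty$, which holds since $\sum_\ell \ell m\kkl_n = n$), we get $\E[\NNng{k}]/m \to \sum_\ell \pi_\ell\,\P(Y_1+\dots+Y_\ell=k)=\kk_k$, which is exactly the distribution of $\sum_{i=1}^M Y_i$ and has \pgf{} $H_\pi(H_\rho(x))$ as in \eqref{ld3}.

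To upgrade convergence of expectations to convergence in probability, I would use a second-moment (or variance) bond. The events ``cycle $C$ has merged degree $k$'' for two distinct cycles $C,C'$ are only weakly dependent through the random labelling, since they involve disjoint (random) subsets of tree-vertices; a short computation shows $\Cov$ between the two indicators is $O(1/n)$ uniformly, so $\Var(\NNng{k}/m)=O(1/m)+O(m\cdot(1/n)/m^2)=o(1)$ using $m\to\infty$. Then Chebyshev gives \eqref{ld2}. Finally I would check that $(\kk_k)$ is a genuine probability distribution: it is the law of an honest (a.s.\ finite) random sum of i.i.d.\ positive-integer variables, so $\sum_k\kk_k=1$ automatically, and the \pgf{} identity \eqref{ld3} follows from the standard compound-distribution formula $\E x^{\sum_{i=1}^M Y_i}=\E\bigpar{H_\rho(x)^M}=H_\pi(H_\rho(x))$.

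The main obstacle I anticipate is making the ``sampling without replacement $\approx$ sampling with replacement'' step fully rigorous while keeping uniformity in $\ell$: one must control the total-variation distance between the two sampling schemes for an $\ell$-subset of an $(n+1)$-element multiset of degrees, and combine it with the fact that the relevant $\ell$'s can grow with $n$ (though slowly — in the application $\ell=O(\log^2 n)$ on a high-probability event). A clean way around this is to condition on the event $\cE$ from \refL{T1} that all cycles have length $O(\log^2 n)$, handle the negligible complement trivially, and then use a crude bound such as $\dtv\le \binom{\ell}{2}/(n+1)=o(1)$ for the coupling error on $\cE$; the contribution of a single long cycle to $\NNng{k}/m$ is $O(1/m)$ anyway. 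The variance bound is the other place where care is needed, but it is routine.
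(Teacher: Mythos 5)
Your proposal is correct and matches the paper's proof: both compute first and second moments of per-cycle indicator variables under the random labelling (degrees sampled without replacement from $T_n$), conclude by Chebyshev, and identify the limit as the compound distribution of $\sum_{i=1}^M Y_i$ via the \pgf{} composition. The one obstacle you anticipate --- uniformity in $\ell$ and the treatment of long cycles --- is actually vacuous: since every tree vertex has degree at least $1$, a cycle of length $\ell$ yields a merged vertex of degree at least $\ell$, so $\NNng{k}$ only involves cycles with $\ell\le k$ and the sum over $(\ell,\bd)$ is finite for each fixed $k$; in particular no dominated convergence and no conditioning on the event $\cE$ (which in any case concerns the random $\sig$ of Proposition~\ref{T1}, not the deterministic $\gs_n$ of this lemma) is needed.
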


\begin{proof}
Let $\bGn$ be the underlying graph of $(\T_n,\gs_n)$.
Fix $\ell$ and number the %$m\kkl_n$ 
cycles in $\gs_n$ of length $\ell$ as $\cC_1,\dots,\cC_{\mnl}$.
For each $i\le m\kkl_n$, let $X_{i1}, \dots,X_{i\ell}$ be the degrees in
$\T_n$ of the vertices covered by $\cC_i$, taken in uniformly random order,
and let $D_i=\sum_{j=1}^\ell X_{ij}$ be the degree of the corresponding
vertex in $\bGn$.

For any sequence $\bd=(d_1,\dots,d_\ell)\in\bbZ_+^\ell$ and $i\le\mnl$, let
\begin{align}\label{pc}
I_{\bd,i}:=\indic{(X_{i1},\dots,X_{i\ell})=\bd}.  
\end{align}
Also, let, as in the statement,  
$Y_1,Y_2,\dots$ be independent random variables with the distribution
$(\rho_k)$, and define
\begin{align}\label{pd}
  p_\bd:=\P\bigpar{(Y_{1},\dots,Y_{\ell})=\bd}=\prod_{j=1}^\ell \rho_{d_j}.
\end{align}
Note that the vertex degrees $X_{11},\dots,X_{1\ell},X_{21},\dots,X_{2\ell},\dots$
are the degrees of a sequence of vertices of $T_n$ that are picked at random
without replacement. Hence we have, by symmetry and \eqref{ld1a},
\begin{align}\label{pe}
  \E I_{\bd,i}
=\E I_{\bd,1}
&=\frac{n_n\kkx{d_1}(n_n\kkx{d_2}+O(1))\dotsm(n_n\kkx{d_\ell}+O(1))}
{(n+1)n\dotsm (n-\ell+2)}
\notag\\&
\to \prod_{j=1}^\ell \rho_{d_j} =p_\bd.
\end{align}
Similarly, for any $i\neq j$,
\begin{align}\label{pf}
  \E \bigsqpar{I_{\bd,i}I_{\bd,j}}
=\E \bigsqpar{I_{\bd,1}I_{\bd,2}}
\to p_\bd^2,
\end{align}
and thus
\begin{align}\label{pfc}
  \Cov\bigpar{I_{\bd,i},I_{\bd,j}}
=  \Cov\bigpar{I_{\bd,1},I_{\bd,2}}
%=  \E \bigsqpar{I_{\bd,i}I_{\bd,j}}
%-\E \bigsqpar{I_{\bd,1}}\E\bigsqpar{I_{\bd,2}}
\to 0
.\end{align}
Hence,
if $\sd:=\sum_{i=1}^{\mnl} I_{\bd,i}$, then, using also \eqref{ld1b},
\begin{align}\label{pg}
\frac{\E \sd}{m} = \frac{\mnl}{m} \E I_{\bd,1} \to \pi_\ell p_\bd
\end{align}
and
\begin{align}\label{ph}
  \Var \sd
= \mnl \Var I_{\bd,1}+ \mnl(\mnl-1)\Cov\bigpar{I_{\bd,1},I_{\bd,2}}
=o(m^2).
\end{align}
Consequently, by Chebyshev's inequality and \mtoo,
\begin{align}
  \label{au}
\sd/m\pto \pi_\ell p_\bd.
\end{align}

Let $|\bd|:=d_1+\dots+d_\ell$. Then, by our definitions,
\begin{align}
  \NNng{k}=\sumlk \sum_{\substack{\bd\in\bbZ_+^\ell\\|\bd|=k}}\sd.
\end{align}
Note that, for any given $k$, this is a finite sum. Hence, \eqref{au}
implies, recalling \eqref{pd},
\begin{align}\label{pt}
  \NNng{k}/m \pto \sum_\ell
  \sum_{\substack{\bd\in\bbZ_+^\ell\\|\bd|=k}}\pi_\ell p_\bd
= \sum_\ell   \sum_{\substack{\bd\in\bbZ_+^\ell\\|\bd|=k}}\pi_\ell p_\bd
=\sum_\ell\pi_\ell  \P\Bigpar{\sum_{i=1}^\ell Y_i = k}.
\end{align}
This proves \eqref{ld2}, with the limit $\kk_k$ given by the \rhs{} of
\eqref{pt}. This clearly equals $\P(\sum_1^M Y_i=k)$, with $M$ as in the
statement. 
Furthermore, \eqref{pt} implies that the generating function $H_\kk$ is
given by
\begin{align}
  H_\kk(x)
&=\sumk x^k\suml\pi_l \P\Bigpar{\sum_{i=1}^\ell Y_i = k}
=\suml\pi_l \E x^{\sum_{i=1}^\ell Y_i}
\notag\\&
=\suml\pi_l \bigpar{\E x^{Y_1}}^\ell
=\suml\pi_l H_\rho(x)^\ell
=H_\pi(H_\rho(x)).
\end{align}
\end{proof}

\begin{proof}  [Proof of \refT{TD}]
We apply \refT{thm_ctrees} and consider the underlying graph of $(\T,\sig)$.
Furthermore, we may do an extra randomization as in \refL{LD} of the way the
tree $\T$ is decorated by the permutation $\sig$; this obviously will not
change the distribution of $\NNng{k}$.
Now condition on $\T$ and $\sig$, and note that the conditional distribution
of $\NNng{k}$ is as in \refL{LD}, with $T_n=\T$ and $\gs_n=\sig$.
These are random, and thus $n\kkk_n$ and $m\kkl_n$ are now random.

Proposition~\ref{T1} shows that \eqref{ld1b} holds in probability:
\begin{align}\label{qa}
  m\kkl_n/m&\pto \pi_\ell,
\qquad\ell\ge1.
\end{align}
with $(\pi_\ell)$ given by \eqref{pi}, 
where $\tau$ is given by \eqref{tau} with
$\ga=1-2\gth$.

Furthermore, it is well-known that the degree distribution in a random plane
tree is asymptotically $\Ge(1/2)$, see \eg{} 
\cite[Section 3.2.1]{Drmota} or
\cite[Theorem 7.11(ii) and Example 10.1]{SJ264}; more precisely,
\eqref{ld1a} holds in probability:
\begin{align}\label{qb}
  n\kkk_n/n \pto \rho_k:= 2^{-k}, \qquad k\ge1.
\end{align}

By the Skorokhod coupling theorem \cite[Theorem~4.30]{Kallenberg},
we may assume that \eqref{qa} and \eqref{qb} hold a.s., for all $k$ and
$\ell$, and then 
\refL{LD} apples after conditioning on $\T$ and $\sig$.
Consequently, \eqref{ld2} holds conditioned on $\T$ and $\sig$, a.s., which
implies that \eqref{ld2} holds without conditioning.
This proves \eqref{td}, and it remains only to identify the limit
distribution.

We have, by \eqref{qb},
\begin{align}
  H_\rho(x)=\sumk 2^{-k}x^k=\frac{x}{2-x},
\end{align}
and  by \eqref{pi} and \eqref{Phi},
\begin{align}
  H_\pi(x)
=\frac{\Phi(\tau x)}{\Phi(\tau)}
=\frac{1}{2\Phi(\tau)}\bigpar{\log(1+\tau x)-\log(1-\tau x)}
.\end{align}
Hence, \eqref{ld3} yields
\begin{align}
  H_\kk(x)&
=H_\pi\Bigpar{\frac{x}{2-x}}
=\frac{1}{2\Phi(\tau)}
\Bigpar{\log\Bigpar{1+\tau \frac{x}{2-x}}-\log\Bigpar{1-\tau \frac{x}{2-x}}}
\notag\\&
=\frac{1}{2\Phi(\tau)}
\Bigpar{\log\Bigpar{2-x+\tau x}-\log\Bigpar{2-x-\tau x}}
\notag\\&
=\frac{1}{2\Phi(\tau)}
\Bigpar{\log\Bigpar{1-\frac{1-\tau}2 x}-\log\Bigpar{1-\frac{1+\tau}2x}}.
\end{align}
This yields \eqref{td2}, recalling again \eqref{Phi}.
\end{proof}

\bibliographystyle{abbrv}
%\bibliography{bibli}

\end{document}